\theoremstyle{plain}
\newtheorem{theorem}{Theorem}[section]
\newtheorem{lemma}[theorem]{Lemma}
\newtheorem{proposition}[theorem]{Proposition}
\newtheorem{corollary}[theorem]{Corollary}
\theoremstyle{definition}
\newtheorem{definition}[theorem]{Definition}
\newtheorem{example}[theorem]{Example}
\theoremstyle{remark}
\newtheorem{remark}[theorem]{Remark}
\def\delta{$\Delta$}
\def\M01{\mathcal{M}_{n\times m}(\{0,1\})}
\def\K{\mathbb K}
\def\lrarrow{\longrightarrow}
\def\bc#1{#1^{bc}}
\def\lar{\longrightarrow}
\def\morfismo#1{\stackrel{#1}{\lar}}
\def\reg{{\rm reg}}
\def \link{{\rm link}}
\def \Star{{\rm star}}
\def \del{{\rm del}}
\def\dd{\Delta}
\def\xx#1{\mathbf x^{#1}}
\def\comp{\hbox{\rm comp}}
\title{Regularity 3 in edge ideals associated to bipartite graphs}
\author{
Oscar Fern\'andez-Ramos
\and
Philippe Gimenez
}
\date{}
\address{\small \rm  University of Valladolid}
\email{oscarf@agt.uva.es}
\address{\small \rm  University of Valladolid}
\email{pgimenez@agt.uva.es}
\begin{document}

\footnotetext{Both authors are partially  supported by  MTM2010-20279-C02-01 (Ministerio de Ciencia e Innovaci\'on, Spain).}

\begin{abstract}
We focus in this paper on edge ideals associated to bipartite graphs and give a combinatorial characterization of those having regularity 3. When the regularity is strictly bigger than 3, we determine the first step $i$ in the minimal graded free resolution where there exists a minimal generator of degree $>i+3$, show that at this step the highest degree of a minimal generator is $i+4$, and determine the value of the corresponding graded Betti number $\beta_{i,i+4}$ in terms of the combinatorics of the associated bipartite graph. The results can then be easily extended to the non-squarefree case through polarization. We also study a family of ideals of regularity 4 that play an important role in our main result and whose graded Betti numbers can be completely described through closed combinatorial formulas.
\end{abstract}

\maketitle

%

\section*{Introduction}

Studying homological invariants of monomial ideals in a polynomial ring $R=\K [x_{1},\ldots,x_{n}]$
by looking for combinatorial properties in discrete objects (graphs, hypergraphs, simplicial complexes, \ldots) associated to them is a well known technique that has been fruitfully exploited in the last decades (see for example the surveys \cite{ha_resolutions_2006} and \cite{morey_edge_2010} and their references). In fact, it provides a quite complete dictionary between these two algebraic and combinatorial classes.

\smallskip
Classical objects used to relate combinatorics with monomial ideals are Stanley-Reisner ideals, simplicial or cellular resolutions and facet ideals. A monomial ideal generated by quadrics can be viewed as the facet ideal of a graph.
When the graph is simple, i.e., has no loops, these ideals are called {\it edge ideals} and were first introduced in \cite{villarreal_cohen-macaulay_1990}.

\smallskip
The homological invariants of a monomial ideal $I$ that we are interested in are those encoded in the minimal graded free resolution of the ideal, namely, the graded Betti numbers and the Castelnuovo-Mumford regularity. Considering the standard $\mathbb N$-grading on the polynomial ring $R$, the graded Betti number $\beta_{i,j}$ is the number of minimal generators of degree $j$ in the $i$-th syzygy module of the ideal. If we denote by $u_i$ (resp. $l_i$) the maximal (resp. minimal) degree of a minimal generator in $i$-th syzygy module then the fact that the resolution is minimal implies that $l_i\geq l_0+i$, where $l_0$ is the minimal degree of a generator of $I$ and the regularity of the ideal is defined as $\reg(I):=\max\{u_i-i\}$. An interesting situation is when all minimal generators of $I$ has same degree $l_0$ and $u_i=l_i=l_0+i$ for all $i$. In this case, $\reg(I)=l_0$ and we say that the ideal has an $l_0$-linear resolution.

\smallskip
There is a nice combinatorial characterization of edge ideals having a 2-linear resolution,
i.e., having regularity 2, in terms of the complement of the associated graph due to Fr{\"o}berg (\cite{froberg_stanley-reisner_1990}).
A new proof of this result has recently been obtained in \cite{nevo_2011} where the topology of the lcm-lattice
of edge ideals is studied.
Another proof of Fr{\"o}berg's characterization can be found in \cite{eisenbud_restricting_2005} where, moreover,
the least $i$ such that $u_i>i+2$ is characterized in a combinatorial way
when the edge ideal does not have a linear resolution.
This result was obtained independently in \cite{fernandez-ramos_first_2009} where it is also shown that $u_i=i+3$ for this value of $i$ where non-linear sygygies first appear. Moreover, $\beta_{i,i+3}$ is determined in terms of the complement of the graph. These results are recalled in the section \ref{pre} (Theorem~\ref{nuestro}) together with all the required definitions and notations. We will also show in the section that the graded Betti numbers of an arbitrary edge ideal $I$ satisfy the following property (Theorem~\ref{BettiDiagEdgeIdeal}): for every $i\geq 0$ and $j\geq i+2$,
$$
\beta_{i,j}(I)=\beta_{i,j+1}(I)=0\quad\Rightarrow\quad \beta_{i+1,j+2}(I)=0\,.
$$
It implies in particular that $u_{i+1}\leq u_i +2$ for all $i\geq 0$ (Corollary~\ref{conca}), a refinement of \cite[Theorem~5.2]{fernandez-ramos_first_2009}.

\smallskip
The aim of this paper is to characterize edge ideals associated to bipartite graphs $G$ having regularity 3 and determine, for those of regularity $>4$, the first step $i$ in the minimal resolution such that $u_i>i+3$. This will be done in section~\ref{reg3} where we will also prove that, for this value of $i$, one has that $u_i=i+4$ and show that $\beta_{i,i+4}$ is the number of induced subgraphs of the bipartite complement of $G$ that are isomorphic to cycles of minimal length. The fundamental role played by these subgraphs is the reason why we previously devote section \ref{cycle} to study the graded Betti numbers of the edge ideal associated to the bipartite complement of an even cycle. We show that such an edge ideal has regularity 4 and give closed combinatorial formulas for all its graded Betti numbers. We finally prove in the last section similar results for monomial ideals generated by quadrics that are not squarefree.

\smallskip
The dependence of the Betti numbers of edge ideals on the characteristic of the field $\K$, even in the case of edge ideals associated to bipartite graphs, prevents the possibility of obtaining similar results for higher regularity.

\section{Preliminaries}\label{pre}

\subsection{Graphs and simplicial complexes}

We start recalling elementary concepts on graphs that we will handle along this paper (see \cite{diestel_graph_2010} and \cite{munkres_elements_1984} for the terminology not included here regarding graphs and simplicial complexes, respectively).

\smallskip
Consider a finite simple graph $G$ and denote by $E(G)$ and $V(G)$ its edge and vertex sets respectively.
We say that a subgraph $H$ of $G$ is {\it induced} on a subset $V'$ of $V(G)$ if $V(H)=V'$ and $E(H)=\{\{u,v\}\in E(G) : u,v\in V'\}$. We write $H=G[V']$ or $H<G$ when
$H$ is an induced subgraph on an unspecified subset of vertices of $G$. The {\it complement} of $G$ is the graph $G^{c}$ on the same vertex set as $G$ with $E(G^{c})=\{\{u,v\}: u,v\in V(G), \{u,v\}\not\in E(G)\}$.
Given a vertex $u\in V(G)$, we denote by $N_G(u)$ the set of vertices of $G$ adjacent to $u$ and, for a subset $W\subset V(G)$, $N_{G}(W):=\bigcup_{u\in W}N_{G}(u)$. The {\it degree} of $u$, denoted
by $\deg(u)$, is the number of elements in $N_G(u)$.

\smallskip
A connected graph $G$ with $t:=|V(G)|\geq 4$ whose vertices are all of degree two is called a $t${\it -cycle} and denoted by $C_{t}$. We will say that $t$ is the {\it length} of the cycle. An induced subgraph which is also a cycle is called an {\it induced cycle}, and a graph $G$ is said to be {\it chordal} if it has no induced cycle. A graph whose vertices have all degree one has necessarily $2s$ vertices for some $s\geq 1$ and consists of $s$ disconnected edges. We denote it by $sK_2$.

\begin{definition}\label{defMatchingNumber}
The {\it induced matching number} of a graph $G$ is the maximal $s$ such that $sK_2<G$. We denote this number by $\mu(G)$.
\end{definition}

Consider now a simplicial complex $\Delta$. Given
a subset $W$ of its vertex set $V(\Delta)$, the {\it induced subcomplex of $\Delta$ on $W$} is $\Delta[W]:=\{\sigma\in \Delta : \sigma\subset W\}$. Recall that if one has two subcomplexes $\Delta_1$ and $\Delta_2$ of $\Delta$ such that $\dd=\dd_1\cup\dd_2$, there is a long exact sequence of reduced homologies,
called the {\it Mayer-Vietoris sequence},
\begin{equation}\label{MVgeneral}
 { \cdots\lrarrow \tilde H_{i}(\dd_0)\lrarrow {\begin{array}{c} \tilde H_{i}(\dd_1)\\ \oplus \\ \tilde H_{i}(\dd_2)\end{array}}\lrarrow \tilde H_{i}(\Delta)\lrarrow \tilde H_{i-1}(\dd_0)\lrarrow \cdots}
\end{equation}
whenever $\dd_0:=\dd_1\cap \dd_2\neq\emptyset$.

\begin{definition}\label{defConeSusp}
Given a simplicial complex $\Delta$
and $u,v\not\in V(\Delta)$, consider the following two simplicial complexes:
\begin{itemize}
 \item $v*\Delta:=\Delta \cup \{\{v\}\cup\sigma : \sigma\in \Delta\}$, the {\em cone} on the base $\Delta$ with appex $v$;
 \item $\Sigma_u^v\Delta:=\Delta \cup \{\{u\}\cup\sigma : \sigma\in \Delta\} \cup \{\{v\}\cup\sigma : \sigma\in \Delta\}$, the {\em suspension} of $\Delta$ on the vertices $u$ and $v$.
\end{itemize}
\end{definition}

The following well-known results on cones and suspensions (see, e.g., \cite[Theorems 8.2 and 25.4]{munkres_elements_1984})
will be very useful in the sequel.

\begin{proposition}\label{homConeSusp}
\begin{enumerate}
 \item\label{homCone}
 $\tilde H_i(v*\Delta)=0,\ \forall\, i\geq 0$.
 \item \label{homSusp}
 $\tilde H_0(\Sigma_u^v\Delta)=0$
 and
 $\tilde H_i(\Sigma_u^v\Delta)\simeq\tilde H_{i-1}(\Delta),\ \forall\, i\geq 1$.
\end{enumerate}
\end{proposition}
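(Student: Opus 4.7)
The plan is to establish (1) directly from the simplicial structure of the cone, and then derive (2) as a formal consequence via Mayer--Vietoris.

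For (1), I would exhibit an explicit chain homotopy on the augmented chain complex of $v*\Delta$: set $h(\sigma)=\{v\}\cup\sigma$ whenever $v\notin\sigma$, $h(\sigma)=0$ otherwise, and $h(\emptyset)=\{v\}$. A routine computation with the simplicial boundary formula gives $\partial h+h\partial=\mathrm{id}$, so every augmented cycle is a boundary and hence $\tilde H_i(v*\Delta)=0$ for all $i\geq 0$. This is the algebraic form of the geometric fact that a cone deformation retracts onto its apex.

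For (2), the key observation is that $\Sigma_u^v\Delta=(u*\Delta)\cup(v*\Delta)$ with $(u*\Delta)\cap(v*\Delta)=\Delta$, the latter because $u\neq v$ and $u,v\notin V(\Delta)$, so no face of the intersection may contain either new vertex. Assuming $\Delta\neq\emptyset$, inserting this decomposition into (\ref{MVgeneral}) with $\Delta_1=u*\Delta$, $\Delta_2=v*\Delta$ and $\Delta_0=\Delta$ yields a long exact sequence in which, by part~(1), the terms $\tilde H_\bullet(u*\Delta)$ and $\tilde H_\bullet(v*\Delta)$ vanish at every index. The connecting morphism therefore becomes an isomorphism $\tilde H_i(\Sigma_u^v\Delta)\simeq \tilde H_{i-1}(\Delta)$ for all $i\geq 1$, and the tail of the same sequence forces $\tilde H_0(\Sigma_u^v\Delta)=0$ since both its neighbours are already zero.

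Neither step presents a real obstacle, as both assertions are textbook; the only point requiring care is the non-emptiness of $\Delta_0=\Delta$ that the stated form of Mayer--Vietoris needs, but this will be automatic in every simplicial complex to which the proposition is later applied. Consequently, a short self-contained proof along the above lines, or a mere reference to \cite[Theorems~8.2 and 25.4]{munkres_elements_1984} as the authors do, both suffice.
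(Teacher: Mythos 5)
Your proposal is correct. Note, however, that the paper itself offers no proof of this proposition: it simply labels the two statements as well known and points to \cite[Theorems 8.2 and 25.4]{munkres_elements_1984}, so there is no argument in the text to compare yours against step by step. What you supply is a standard self-contained derivation, and both halves check out: the contracting homotopy $h(\sigma)=\{v\}\cup\sigma$ (with $h(\sigma)=0$ when $v\in\sigma$ and $h(\emptyset)=\{v\}$ in augmentation degree) does satisfy $\partial h+h\partial=\mathrm{id}$ on the augmented chain complex once an ordering convention placing $v$ first is fixed, which kills all reduced homology of the cone; and the decomposition $\Sigma_u^v\Delta=(u*\Delta)\cup(v*\Delta)$ with intersection exactly $\Delta$ (no face of $v*\Delta$ can contain $u$ since $u\notin V(\Delta)$ and $u\neq v$) feeds into (\ref{MVgeneral}) to give both the vanishing of $\tilde H_0$ and the degree shift. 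Your closing caveat is the right one to flag: the sequence (\ref{MVgeneral}) requires $\Delta_1\cap\Delta_2\neq\emptyset$, and indeed the proposition as stated silently assumes $\Delta$ has at least one vertex (for $\Delta=\{\emptyset\}$ the suspension is two isolated points and $\tilde H_0\neq 0$); this degenerate case is excluded in every application in the paper, so nothing is lost. In short, your route is more explicit than the paper's citation-only treatment, at the cost of a page of routine verification; either is acceptable.
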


\smallskip

Associated to a graph $G$, one has its {\it independence complex} $\Delta(G)$ which is defined as the simplicial complex on the vertex set $V(G)$ such that $F\subset V(G)$ is a face of $\Delta(G)$ if and only if no edge of $G$ is a subset of $F$.
Observe that if $G$ is a graph and $W$ is an arbitrary subset of $V(G)$, one has that
$$\Delta(G)[W]=\Delta(G[W])\,.$$

\begin{remark}\label{flagcx}
A {\it flag complex} is a simplicial complex $\dd$ such that, for any $\sigma\subset V(\dd)$, if every pair of elements in $\sigma$ is a face of $\Delta$ then $\sigma$ is also a face of $\dd$. In particular, a flag complex containing all pairs of vertices is necessarily a simplex. Moreover, the independence complex $\Delta(G)$ of a graph $G$ is always, by definition, a flag complex.
\end{remark}

\begin{definition}\label{defDelLinkStar}
If $\Delta:=\Delta(G)$ is the independence complex of a graph $G$, for all $u\in V:=V(G)$, we consider three induced subcomplexes of $\Delta$ that will be featured in this paper.
Note that each of them is the independence complex of an induced subgraph of $G$:
\begin{itemize}
\item $\del_\Delta(u):=\{\sigma\in\Delta : u\not\in\sigma\}=\Delta[V\setminus\{u\}]$;
\item $\link_\Delta(u):=\{\sigma\in\Delta : u\not\in\sigma \hbox{ and } \sigma\cup \{u\}\in \Delta\}=\Delta[V\setminus(N_{G}(u) \cup \{u\})]$;
\item $\Star_\Delta(u):=\{\sigma\in\Delta : \sigma\cup \{u\}\in \Delta\}=\Delta[V\setminus N_{G}(u)]$.
\end{itemize}
\end{definition}

For any vertex $v\in V(G)$,
$\Star_\Delta(v)$ is a cone with appex $v$ and hence it is acyclic by Proposition~\ref{homConeSusp}.\ref{homCone}.
Since
$\del_\Delta(v)\cup \Star_\Delta(v)=\Delta$ and
$\del_\Delta(v)\cap \Star_\Delta(v)=\link_\Delta(v)$, we can apply (\ref{MVgeneral})
whenever $\link_\Delta(v)\not=\emptyset$ and get
{\small
\begin{multline}\label{MV}
 { \cdots\lrarrow \tilde H_{i}(\link_\Delta(v))\lrarrow
 \tilde H_{i}(\del_\Delta(v))
 \lrarrow \tilde H_{i}(\Delta)\lrarrow \tilde H_{i-1}(\link_\Delta(v))\lrarrow \cdots} \\
 {\cdots \lrarrow \tilde H_{0}(\link_\Delta(v))\lrarrow
 \tilde H_{0}(\del_\Delta(v))
 \lrarrow \tilde H_{0}(\Delta)\lrarrow 0}\,.
 \end{multline}
}

\smallskip
Let's focus now on bipartite graphs. Recall that a graph $G$ is {\it bipartite} if its vertex set can be splitted into two disjoint sets, $V(G)=X\sqcup Y$, in such a way that any edge of $G$ has one vertex in $X$ and the other in $Y$.
When one deals with bipartite graphs, it is usually convenient to use different symbols for variables in $X$ and variables in $Y$. We will denote variables in $X$ by $x_1,\ldots,x_n$ and variables in $Y$ by $y_1,\ldots,y_m$.
The {\it biadjacency matrix} of the bipartite graph $G$, $M(G)=(a_{i,j})\in\M01$, is defined by
$a_{i,j}=1$ if $\{x_i,y_j\}\in E(G)$, $0$ otherwise.
The {\it bipartite complement} of a bipartite graph $G$ is the bipartite graph $G^{bc}$ on the same vertex set as $G$, $V(G^{bc})=X\sqcup Y$, with $E(G^{bc})=\{\{x,y\}: x\in X, y\in Y, \{x,y\}\not\in E(G)\}$.
One has that $M(G^{bc})={\bf 1}_{n\times m}-M(G)$ where ${\bf 1}_{n\times m}$ is the $n\times m$ matrix whose entries are all 1.
Note that the bipartition $V(G)=X\sqcup Y$
may not be unique if the graph $G$ is not connected and the notions of biadjacency matrix or bipartite complement depend on the bipartition. That's the reason why in section~\ref{reg3} we will restrict ourselves to connected bipartite graphs.

\smallskip
The next lemma will be useful to handle the homology of the independence complex of a bipartite graph $G$. The last three items are rules that one can apply for a reduction to a simpler case by removing vertices of $G$ when the biadjacency matrix $M$ of $G$ satisfies some properties.

\begin{lemma}\label{lanota}
Let $G$ be a bipartite graph with bipartition $V(G)=\{x_1,\ldots,x_n\}\sqcup \{y_1,\ldots,y_m\}$,
biadjacency matrix $M=(a_{i,j})\in\M01$, and independence complex $\Delta$.
\begin{enumerate}
\item \label{launo}
If $M$ has a row or a column whose entries are all 0, then $\tilde{H}_i(\Delta)=0$ for all $i\geq 0$.
\item \label{lados}
If there exist $r$ and $c$ such that $a_{r,c}=1$ and the rest of entries on the row $r$ and the column $c$ are zeros then, for all $i>0$,
$$\tilde{H}_i(\Delta)\simeq\tilde{H}_{i-1}(\Delta[V(G)\setminus\{x_r,y_c\}])\,.$$
\item \label{latres}
If $M$ has more than one row {\rm(}resp. column{\rm)} and if the entries on the row $r$ {\rm(}resp. column $c${\rm)} are all 1 then,
for all $i\geq 0$,
$$\tilde{H}_i(\Delta)\simeq\tilde{H}_{i}(\Delta[V(G)\setminus\{x_r\}])\
{\rm(}resp.\ \tilde{H}_i(\Delta)\simeq\tilde{H}_{i}(\Delta[V(G)\setminus\{y_c\}]){\rm)}
\,.$$
\item \label{lacuatro}
If $M$ has a row $r$ {\rm(}resp. column $c${\rm)} with a unique zero entry, say $a_{r,c}=0$, and if there is another zero entry on the column $c$ {\rm(}resp. row  $r${\rm)} then for all $i\geq 0$,
$$\tilde{H}_i(\Delta)\simeq\tilde{H}_{i}(\Delta[V(G)\setminus\{x_r\}])\
{\rm(}resp.\ \tilde{H}_i(\Delta)\simeq\tilde{H}_{i}(\Delta[V(G)\setminus\{y_c\}]){\rm)}
\,.$$
\item \label{lacinco}
If $M$ has two rows $r$ and $r'$ {\rm(}resp. two columns $c$ and $c'${\rm)} such that
$\{j:\,a_{r,j}=0\}\subset \{j:\,a_{r',j}=0\}$
{\rm(}resp. $\{i:\,a_{i,c}=0\}\subset \{j:\,a_{i,c'}=0\}${\rm)}
then for all $i\geq 0$,
$$\tilde{H}_i(\Delta)\simeq\tilde{H}_{i}(\Delta[V(G)\setminus\{x_r\}])\
{\rm(}resp.\ \tilde{H}_i(\Delta)\simeq\tilde{H}_{i}(\Delta[V(G)\setminus\{y_c\}]){\rm)}
\,.$$
\end{enumerate}
\end{lemma}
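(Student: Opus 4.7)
My plan is to handle each of the five items by reducing to either Proposition~\ref{homConeSusp} directly or to the Mayer--Vietoris sequence~(\ref{MV}) applied at a carefully chosen vertex of $\Delta$.

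For (\ref{launo}), a zero row $r$ says that $x_r$ has no neighbor in $G$, so every face of $\Delta$ can be enlarged by $x_r$; hence $\Delta$ is a cone with apex $x_r$ and Proposition~\ref{homConeSusp}.\ref{homCone} forces the vanishing. The case of a zero column is symmetric. For (\ref{lados}), the hypothesis says $N_G(x_r)=\{y_c\}$ and $N_G(y_c)=\{x_r\}$, so $\Star_\Delta(x_r)$ is a cone with apex $x_r$ while $\del_\Delta(x_r)$ is a cone with apex $y_c$ (the vertex $y_c$ becomes isolated after deleting $x_r$). Since their intersection is $\link_\Delta(x_r)=\Delta[V(G)\setminus\{x_r,y_c\}]$, we can write $\Delta=\Sigma_{x_r}^{y_c}\link_\Delta(x_r)$ and Proposition~\ref{homConeSusp}.\ref{homSusp} yields the claimed shift in homology.

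Items (\ref{latres}), (\ref{lacuatro}) and (\ref{lacinco}) admit a uniform treatment: apply (\ref{MV}) at $x_r$ (the column statements are symmetric) and verify that $\link_\Delta(x_r)$ is acyclic. Since $\Star_\Delta(x_r)$ is always acyclic, such a vanishing will immediately produce $\tilde H_i(\Delta)\simeq\tilde H_i(\del_\Delta(x_r))=\tilde H_i(\Delta[V(G)\setminus\{x_r\}])$ for every $i\geq 0$, which is exactly what each statement requires. The acyclicity of the link will in turn be obtained by exhibiting a vertex that is isolated in the subgraph of $G$ underlying $\link_\Delta(x_r)$, so that the link is itself a cone and Proposition~\ref{homConeSusp}.\ref{homCone} applies.

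The choice of isolated vertex is where the three combinatorial hypotheses enter. In (\ref{latres}), with row $r$ all ones, $\link_\Delta(x_r)=\Delta[X\setminus\{x_r\}]$ is the full simplex on $X\setminus\{x_r\}$ (bipartiteness rules out $x$--$x$ edges) and is nonempty since $n\geq 2$. In (\ref{lacuatro}), the vertex set of the link is $(X\setminus\{x_r\})\cup\{y_c\}$; the second zero $a_{r',c}=0$ with $r'\neq r$ ensures that $x_{r'}$ has no neighbor in the induced subgraph and thus serves as the isolated vertex. In (\ref{lacinco}), the link has vertex set $(X\setminus\{x_r\})\cup\{y_j:a_{r,j}=0\}$, and the inclusion $\{j:a_{r,j}=0\}\subset\{j:a_{r',j}=0\}$ says precisely that no surviving $y_j$ is adjacent to $x_{r'}$, so $x_{r'}$ is once more isolated. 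The principal care to take throughout is to check, from the hypotheses on $M$, that the link is nonempty so that (\ref{MV}) may legitimately be invoked.
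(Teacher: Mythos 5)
Your proof is correct and follows essentially the same route as the paper: item (\ref{launo}) via the cone on an isolated vertex, and items (\ref{latres})--(\ref{lacinco}) via the Mayer--Vietoris sequence (\ref{MV}) at $x_r$ after exhibiting an isolated vertex $x_{r'}$ that makes $\link_\Delta(x_r)$ an acyclic (nonempty) cone, exactly as the paper does (it merely phrases (\ref{latres}) and (\ref{lacuatro}) as special cases of (\ref{lacinco})). For item (\ref{lados}) you package the two acyclic pieces as the suspension $\Sigma_{x_r}^{y_c}\Delta[V(G)\setminus\{x_r,y_c\}]$ and invoke Proposition~\ref{homConeSusp}.\ref{homSusp}, whereas the paper applies Mayer--Vietoris to $\del_\Delta(x_r)\cup\del_\Delta(y_c)$; these amount to the same computation.
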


\begin{proof}
\ref{launo}:
The vertex $z$ of $G$ corresponding to the row or column of $M$ with zero entries is isolated in $G$ and hence \delta\ is
a cone with appex $z$, so it is acyclic by Proposition~\ref{homConeSusp}.\ref{homCone}.

\ref{lados}:
By \ref{launo}, one has that $\tilde{H}_i(\del_\Delta(y_c))=\tilde{H}_i(\del_\Delta(x_r))=0$. Since
$\Delta=\del_{\Delta}(x_{r})\cup \del_\Delta(y_c)$ and $\del_{\Delta}(x_{r})\cap \del_\Delta(y_c)=\del_{\del_{\Delta}(x_{r})}(y_c)$,
the result follows from the Mayer-Vietoris sequence (\ref{MV}).

\ref{latres} and \ref{lacuatro} are particular cases of \ref{lacinco} that follows by applying again the Mayer-Vietoris sequence (\ref{MV}),
observing that $\link_\Delta(x_{r})$ is acyclic by \ref{launo}.
\end{proof}

\begin{example}\label{sigmam}
If $\Sigma_m$ is the independence complex of $mK_2$, then
$$
\dim_{ \K}(\tilde H_{i}(\Sigma_{m}))=\left\{\begin{array}{ll} 1 &\hbox{ if } i=m-1, \\ 0 &\hbox{ otherwise } . \end{array} \right.
$$
This can be shown by induction on $m\geq 1$ as follows. Since $\Sigma_{1}$ consists of two disjoint vertices, $\tilde H_{0}(\Sigma_{1})\simeq \K$ and $\tilde H_{i}(\Sigma_{1})=0$ for all $i>0$. If $m>1$, $\tilde H_{0}(\Sigma_{m})=0$ because
$\Sigma_{m}$ is connected and for $i>0$, applying Lemma~\ref{lanota}.\ref{lados},
one gets that $\tilde H_{i}(\Sigma_{m})\simeq\tilde H_{i-1}(\Sigma_{m-1})$ and the result follows.
\end{example}

\subsection{Some properties of the graded Betti numbers of an edge ideal}

Given a minimal graded free resolution of a homogeneous ideal $I$ in $R=\K[x_1,\cdots,x_n]$,
 $$
0\lar \bigoplus_jR(-j)^{\beta_{p,j}}\lar\ldots\lar
\bigoplus_jR(-j)^{\beta_{1,j}}\morfismo{\varphi_1}
 \bigoplus_jR(-j)^{\beta_{0,j}}\morfismo{\varphi_0}I\lar0\,,
 $$
the {\it regularity} of $I$ (in the sense of Castelnuovo-Mumford) is defined in terms of the graded Betti numbers $\beta_{i,j}$ as ${\reg(I)}:=\max\{j-i :\beta_{i,j}\not=0\}$. The graded Betti numbers of $I$ are usually arranged
in a table called the {\it Betti diagram} of $I$
where $\beta_{i,j}$ is placed in the $i$-th column and $(j-i)$-th row of the table. Note that the index of the last nonzero row in the Betti diagram of $I$ is its regularity.

\smallskip
If $I$ is a monomial ideal, we can provide the polynomial ring $R$ with the usual $\mathbb N^{n}$-multigrading and $I$ has a minimal multigraded free resolution. We can then define its {\it multigraded Betti numbers}, $\beta_{i,\mathbf m}$, as the number of minimal generators of degree ${\mathbf m}\in \mathbb N^{n}$ in the $i$-th syzygy module.

\smallskip
There is a one-to-one correspondence between squarefree monomial ideals generated in degree 2 and simple graphs:
associated to a simple graph $G$, one has the edge ideal $I(G)$ generated by the monomials of the form $x_ix_j$ with $\{x_i,x_j\}\in E(G)$.
The edge ideal $I(G)$ is indeed the Stanley-Reisner ideal of the independence complex $\Delta(G)$ of $G$, $I(G)=I_{\Delta(G)}$. The multigraded Betti numbers of an edge ideal $I(G)$ can be expressed in terms of the reduced homology of $\Delta(G)$ using Hochster's Formula (\cite[Theorem (5.1)]{hochster_cohen-macaulay_1977}) that we recall now.
For any $\mathbf m\in {\mathbb N}^n$ and $i\geq 0$, one has that $\beta_{i,\mathbf m}(I(G))=0$ if the monomial $\xx{\mathbf m}:=x_1^{m_1}\cdots x_n^{m_n}$ is not squarefree, and
\begin{equation} \label{hochsterMult}
\beta_{i,\mathbf{m}}(I(G))=\dim_\K\tilde{H}_{|\mathbf{m}|-i-2}(\dd(G)[W])
\end{equation}
otherwise, where $W:=\{j\in [n] : m_{j}=1\}$. The graded Betti numbers of $I(G)$ are then given by the following formula:
\begin{equation} \label{hochster}
\beta_{i,j}(I(G))=\sum_{W\subset V(G),|W|=j}\dim_{\K}(\tilde H_{j-i-2}(\Delta(G)[W]))\, .
\end{equation}

Hochster's Formula is a powerful tool when one wants to get information on the Betti numbers of edge ideals. It can be used for example to prove the following property on the graded Betti numbers of an edge ideal $I(G)$:

\begin{theorem}\label{BettiDiagEdgeIdeal}
For any $i\geq 0$ and any $j\geq i+2$, if $\beta_{i,j}(I(G))=\beta_{i,j+1}(I(G))=0$ then $\beta_{i+1,j+2}(I(G))=0$.
\end{theorem}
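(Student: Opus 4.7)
By Hochster's Formula~(\ref{hochster}), my plan is to restate the claim as a topological statement about the independence complex $\Delta:=\Delta(G)$. Setting $d:=j-i-2\geq 0$, the hypotheses become $\tilde H_d(\Delta[W])=0$ for every $W\subset V(G)$ with $|W|=j$, and $\tilde H_{d+1}(\Delta[W'])=0$ for every $W'\subset V(G)$ with $|W'|=j+1$; the conclusion $\beta_{i+1,j+2}(I(G))=0$ reduces to showing that $\tilde H_{d+1}(\Delta[W''])=0$ for every $W''\subset V(G)$ with $|W''|=j+2$. I would therefore fix such a $W''$ and argue by a case split on whether $G[W'']$ is edgeless.

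If $W''$ is an independent set of $G$ then every subset of $W''$ is a face of $\Delta$, so $\Delta[W'']$ is the full simplex on $W''$ and has trivially no reduced homology. Otherwise there exist $u,v\in W''$ with $\{u,v\}\in E(G)$; since $\{u,v\}\notin\Delta[W'']$, every face of $\Delta[W'']$ misses $u$ or misses $v$, which gives the decomposition
$$\Delta[W'']=\del_{\Delta[W'']}(u)\cup\del_{\Delta[W'']}(v),$$
whose intersection equals $\Delta[W''\setminus\{u,v\}]$ and is non-empty since $j\geq 2$.

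Feeding this decomposition into the Mayer-Vietoris sequence~(\ref{MVgeneral}) produces the exact fragment
$$\tilde H_{d+1}(\del_{\Delta[W'']}(u))\oplus\tilde H_{d+1}(\del_{\Delta[W'']}(v))\lrarrow\tilde H_{d+1}(\Delta[W''])\lrarrow\tilde H_d(\Delta[W''\setminus\{u,v\}]).$$
The two deletions are the induced subcomplexes $\Delta[W''\setminus\{u\}]$ and $\Delta[W''\setminus\{v\}]$, each on $j+1$ vertices, so the left term vanishes by the second hypothesis; the right term vanishes by the first since $|W''\setminus\{u,v\}|=j$. Exactness then forces $\tilde H_{d+1}(\Delta[W''])=0$, as required. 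The only delicate point is the case split: the union decomposition is valid precisely because $\{u,v\}$ is an edge of $G$ and hence not a face of $\Delta[W'']$, which is why the independent case (where no such edge is available) has to be handled separately, but in that case $\Delta[W'']$ is a simplex and the conclusion is immediate.
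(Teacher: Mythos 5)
Your proof is correct and follows essentially the same route as the paper's: a non-edge-free $W''$ yields $u,v$ with $\{u,v\}\notin\Delta[W'']$, the decomposition $\Delta[W'']=\Delta[W''\setminus\{u\}]\cup\Delta[W''\setminus\{v\}]$ with intersection $\Delta[W''\setminus\{u,v\}]$, and Mayer--Vietoris plus Hochster's formula finish the argument. The only cosmetic differences are that you argue directly with an explicit case split on whether $W''$ is independent, whereas the paper argues by contradiction and extracts the non-face from the flag-complex property (Remark~\ref{flagcx}).
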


\begin{proof}
Denote by $\Delta:=\Delta(G)$ the independence complex of $G$ and
assume that $\beta_{i+1,j+2}(I(G))\not=0$. By (\ref{hochsterMult}), there exists $W\subset V(G)$ with $|W|=j+2$ such that $\dim_\K\tilde H_{j-i-1}(\dd[W])>0$. As $\dd[W]=\dd(G[W])$ is a flag complex, there exist $u,v\in W$ such that $\{u,v\}\not\in\dd(G)[W]$ by Remark~\ref{flagcx}. Consider then the following decomposition of $\Delta[W]$,
$$
\Delta[W]=\Delta[W\setminus\{u\}]\cup \Delta[W\setminus\{v\}]\,,
$$
 with
$\Delta[W\setminus\{u\}]\cap \Delta[W\setminus\{v\}]=\Delta[W\setminus\{u,v\}]$ which is not empty since $|W|=j+2>2$.
Invoking Hochster's Formula (\ref{hochsterMult}) again, one has that
$\tilde H_{j-i-1}(\Delta[W\setminus\{u\}])=\tilde H_{j-i-1}(\Delta[W\setminus\{v\}])=0$ because $\beta_{i,j+1}(I)=0$
and
$
\tilde H_{j-i-2}(\Delta[W\setminus\{u,v\}])=0
$
because $\beta_{i,j}(I)=0$.
Using the Mayer-Vietoris sequence (\ref{MVgeneral}), one gets that
$\tilde H_{j-i-1}(\Delta[W])=0$, a contradiction.
\end{proof}

Note that Theorem~\ref{BettiDiagEdgeIdeal} can easily
be extended to monomial ideals generated in degree two that may not be squarefree through polarization (see section \ref{nonsqf}). The following direct consequence answers
a question by Aldo Conca
who asked if our \cite[Theorem 5.2]{fernandez-ramos_first_2009} could be improved in this direction. In \cite{conca_2010}, Avramov, Conca and Iyengar proved bounds for the syzygies of Koszul algebras and that question arose in this context.

\begin{corollary}\label{conca}
Let $I$ be a monomial ideal generated in degree two and denote by $u_i$ the maximal degree of a minimal generator in its $i$-th syzygy module. Then, for all $i\geq 0$, $u_{i+1}\leq u_i +2$.
\end{corollary}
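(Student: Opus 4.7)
The plan is to reduce to the squarefree case and then iterate Theorem~\ref{BettiDiagEdgeIdeal}. For the reduction, I would invoke polarization: if $I$ is any monomial ideal generated in degree two, its polarization $\widetilde{I}$ is again generated in degree two and squarefree, so $\widetilde{I}=I(G)$ for some simple graph $G$, and polarization preserves all graded Betti numbers. In particular, the sequence $u_i$ is unchanged, so it suffices to establish $u_{i+1}\leq u_i+2$ under the assumption that $I=I(G)$ is an edge ideal, where Theorem~\ref{BettiDiagEdgeIdeal} applies directly.

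For the main step, I would fix $i\geq 0$. If $\beta_i(I)=0$ then minimality of the resolution forces $\beta_j(I)=0$ for all $j>i$ as well, and the inequality is vacuous; otherwise $u_i$ is defined and satisfies $u_i\geq l_0+i=i+2$. I would then show by a one-step induction on $k\geq 3$ that $\beta_{i+1,u_i+k}(I)=0$. Setting $j:=u_i+k-2$, the hypothesis $j\geq i+2$ required by Theorem~\ref{BettiDiagEdgeIdeal} holds because $j\geq u_i+1\geq i+3$, while the vanishings $\beta_{i,j}(I)=\beta_{i,j+1}(I)=0$ are automatic from the definition of $u_i$ (both second subscripts strictly exceed $u_i$). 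The theorem then yields $\beta_{i+1,j+2}(I)=\beta_{i+1,u_i+k}(I)=0$, and letting $k$ range over all integers $\geq 3$ gives exactly $u_{i+1}\leq u_i+2$.

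No serious obstacle is expected: the argument only rephrases the conclusion of Theorem~\ref{BettiDiagEdgeIdeal} in terms of the top nonzero row of each column of the Betti diagram. The one point requiring a little care is the polarization reduction, where one relies on the standard fact (see e.g.\ the book of Herzog and Hibi on monomial ideals) that polarization of a monomial ideal preserves its graded Betti numbers. Once that is in place, the corollary is a direct consequence of Theorem~\ref{BettiDiagEdgeIdeal}.
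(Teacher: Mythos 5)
Your proposal is correct and matches the paper's intended argument: the paper states the corollary as a direct consequence of Theorem~\ref{BettiDiagEdgeIdeal}, extended to the non-squarefree case via polarization (citing Herzog--Hibi), which is precisely your reduction. Your explicit bookkeeping with $j:=u_i+k-2$ for $k\geq 3$ just spells out the step the paper leaves implicit, and it checks out since $j\geq u_i+1\geq i+3$ guarantees both the hypothesis $j\geq i+2$ and the vanishings $\beta_{i,j}=\beta_{i,j+1}=0$.
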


\medskip
When  an edge ideal $I(G)$ has a linear resolution, all the nonzero entries in its Betti diagram are located on the first row.
Fr\"oberg proved that an edge ideal $I(G)$ has a linear resolution if and only if the graph $G^{c}$ is chordal. We can rephrase this nice combinatorial characterization as follows:

\begin{theorem}[{\cite{froberg_stanley-reisner_1990}}] \label{froberg}
An edge ideal $I(G)$ has regularity 2 if and only if $G^{c}$ does not have induced cycles.
\end{theorem}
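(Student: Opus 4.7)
The plan is to use Hochster's formula~(\ref{hochsterMult}) to translate the statement into a purely topological one: $\reg(I(G))\leq 2$ is equivalent to the vanishing of $\tilde H_{k}(\Delta(G[W]))$ for every $W\subset V(G)$ and every $k\geq 1$, because $\Delta(G)[W]=\Delta(G[W])$. Since $(G[W])^{c}=G^{c}[W]$ and the property ``$G^{c}$ has no induced cycle'' passes to every induced subgraph, the theorem reduces to the two assertions: (a) if $G^{c}$ contains an induced cycle on a set $W$ then $\tilde H_{1}(\Delta(G[W]))\neq 0$, and (b) if $G^{c}$ is chordal then $\tilde H_{k}(\Delta(G))=0$ for every $k\geq 1$.

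For (a), I would take an induced $C_{t}$ ($t\geq 4$) of $G^{c}$ supported on $W$. A subset of $W$ is a face of $\Delta(G[W])$ exactly when any two of its elements are non-adjacent in $G[W]$, i.e., adjacent in $G^{c}[W]=C_{t}$, so the faces of $\Delta(G[W])$ are precisely the cliques of $C_{t}$: the vertices and edges of the cycle. Hence $\Delta(G[W])$ is a triangulated circle, $\tilde H_{1}(\Delta(G[W]))\simeq \K$, and Hochster's formula yields $\beta_{t-3,t}(I(G))\neq 0$, so $\reg(I(G))\geq 3$.

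For (b), I would argue by induction on $|V(G)|$, invoking the classical fact that every chordal graph admits a simplicial vertex. Let $v$ be a simplicial vertex of $G^{c}$, so that $N_{G^{c}}(v)$ is a clique in $G^{c}$. Since $N_{G^{c}}(v)=V(G)\setminus(N_{G}(v)\cup\{v\})$, any two vertices of $N_{G^{c}}(v)$ are non-adjacent in $G$, so the induced subgraph $G[N_{G^{c}}(v)]$ has no edges; consequently $\link_{\Delta}(v)=\Delta(G[N_{G^{c}}(v)])$ is a full simplex and thus acyclic. Meanwhile $\del_{\Delta}(v)=\Delta(G-v)$, and $(G-v)^{c}=G^{c}-v$ is again chordal, so the induction hypothesis yields $\tilde H_{k}(\del_{\Delta}(v))=0$ for every $k\geq 1$. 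Plugging these two vanishings into the Mayer-Vietoris sequence~(\ref{MV}) gives $\tilde H_{k}(\Delta(G))=0$ for every $k\geq 1$. The degenerate situation $N_{G^{c}}(v)=\emptyset$, in which the link is not honestly nonempty in the sense required by~(\ref{MV}), is handled directly: then $v$ is adjacent in $G$ to every other vertex, hence is an isolated point of $\Delta(G)$, and the reduced homologies of $\Delta(G)$ and $\Delta(G-v)$ coincide in positive degrees.

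The main obstacle I expect is threading the induction cleanly: getting the Mayer-Vietoris sequence to apply requires the link to be nonempty, so the boundary case where the chosen simplicial vertex is isolated in $G^{c}$ must be dispatched separately; everything else is a matter of identifying the relevant subcomplexes as independence complexes of suitable induced subgraphs and iterating the long exact sequence.
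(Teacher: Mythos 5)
Your argument is correct. Note first that the paper does not actually prove Theorem~\ref{froberg}: it is quoted from Fr\"oberg's paper \cite{froberg_stanley-reisner_1990} without proof, so there is no internal argument to compare yours against. What you give is a clean, self-contained modern derivation, and it uses precisely the toolkit the paper assembles for its own results: the reduction via Hochster's formula (\ref{hochsterMult}) of $\reg(I(G))\leq 2$ to the vanishing of $\tilde H_k(\Delta(G[W]))$ for all $W$ and all $k\geq 1$, the identification of $\link_\Delta(v)$ and $\del_\Delta(v)$ as independence complexes of induced subgraphs (Definition~\ref{defDelLinkStar}), and the Mayer--Vietoris sequence (\ref{MV}). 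Both directions check out: in (a), the independence complex of $G[W]$ with $G^{c}[W]\simeq C_t$, $t\geq 4$, is exactly the $1$-dimensional cycle complex, a triangulated circle, so $\tilde H_1\neq 0$ and $\beta_{t-3,t}(I(G))\neq 0$; in (b), Dirac's theorem supplies the simplicial vertex, the link is a nonempty simplex (acyclic \emph{and} connected, which is what the $k=1$ stage of the long exact sequence requires), and the deleted complex is handled by induction since chordality is hereditary under taking induced subgraphs. You also correctly isolate the one delicate point, namely $N_{G^{c}}(v)=\emptyset$, where the link degenerates and (\ref{MV}) does not apply, and dispose of it by observing that $v$ is then an isolated point of $\Delta(G)$. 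The only cosmetic omissions are the trivial base case of the induction and the standing assumption $E(G)\neq\emptyset$ ensuring $\reg(I(G))\geq 2$; neither affects correctness.
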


In \cite{eisenbud_restricting_2005}, the authors go one step further and show that if $\reg(I(G))>2$,
the non-linear syzygies appear for the first time at the $(t-3)$-th step of the resolution where
$t$ is the minimal length of an induced cycle in $G^c$. This result is contained in the following stronger statement:

\begin{theorem}[{\cite{fernandez-ramos_first_2009}}]\label{nuestro}
If $I(G)$ is an edge ideal with $\reg(I(G))>2$, let $t\geq 4$ be the minimal length of an induced cycle in $G^{c}$. Then:
\begin{itemize}
\item $\beta_{i,j}(I(G))=0$ for all $i<t-3$ and $j> i+2$;
\item $\beta_{t-3,t}(I(G))=|\{$induced $t$-cycles in $G^{c}\}|$;
\item  $\beta_{t-3,j}(I(G))=0$ for all $j>t$;
\item
for any $\mathbf m\in {\mathbb N}^n$ such that $\vert{\mathbf m}\vert=t$, one has that
$\beta_{t-3,\mathbf m}(I(G))=1$ if $\mathbf m\in \{0,1\}^{n}$ and $G^{c}[W]\simeq C_{t}$ where $W:=\{x_i\,:\ m_i=1\}$. Otherwise, $\beta_{t-3,\mathbf m}(I(G))=0$.
\end{itemize}
\end{theorem}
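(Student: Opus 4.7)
The plan is to translate every claim in the statement, via Hochster's Formula (\ref{hochsterMult})--(\ref{hochster}), into a question about the reduced homology of induced subcomplexes $\Delta(G)[W]=\Delta(G[W])$, and then to analyze these by induction on $|W|$: the base cases are supplied by Fr\"oberg's theorem (Theorem~\ref{froberg}) applied to small $W$, and the inductive step is driven by the Mayer-Vietoris sequence (\ref{MV}) for the decomposition $\Delta=\del_\Delta(v)\cup\Star_\Delta(v)$, in which the star is acyclic.

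The first bullet reformulates as the vanishing
$$
\tilde H_k(\Delta(G[W]))=0\quad\text{for all }k\geq 1\text{ and }|W|\leq k+t-2,
$$
which I would prove by induction on $|W|$. When $|W|\leq t-1$, any induced cycle of $G^c[W]$ would have length less than $t$, contradicting the minimality of $t$; hence $G^c[W]$ is chordal, $I(G[W])$ has a linear resolution by Fr\"oberg, and Hochster's formula applied at the top multidegree $W'=W$ forces the vanishing. When $|W|\geq t$ one necessarily has $k\geq 2$, and for any $v\in W$ the Mayer-Vietoris sequence yields
$$
\tilde H_k(\del_\Delta(v))\to \tilde H_k(\Delta(G[W]))\to \tilde H_{k-1}(\link_\Delta(v)).
$$
The deletion $\del_\Delta(v)=\Delta(G[W\setminus\{v\}])$ has $|W|-1\leq k+t-3$ vertices, and the link $\link_\Delta(v)=\Delta(G[W\setminus (N_G(v)\cup\{v\})])$ has at most $|W|-1\leq (k-1)+t-2$ vertices, so both outer terms vanish by induction, and so does $\tilde H_k(\Delta(G[W]))$.

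The third bullet is handled by the same Mayer-Vietoris setup on the borderline $|W|=k+t-1$ with $k=j-t+1\geq 2$. If some vertex of $W$ is isolated in $G[W]$, then $\Delta(G[W])$ is a cone and is acyclic by Proposition~\ref{homConeSusp}.\ref{homCone}. Otherwise, picking any $v\in W$, the link has at most $|W|-2=(k-1)+t-2$ vertices and the deletion has $|W|-1=k+t-2$ vertices, so the estimate just established for the first bullet applies to both flanking groups and forces $\tilde H_k(\Delta(G[W]))=0$.

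Finally, for the multigraded fourth bullet, Hochster eliminates all non-squarefree multidegrees at once, and for squarefree $\mathbf m$ supported on a set $W$ with $|W|=t$ one is reduced to computing $\tilde H_1(\Delta(G[W]))$. If $G^c[W]\simeq C_t$, then the independent sets of $G[W]$ are the cliques of $C_t$, which since $t\geq 4$ are only its vertices and edges; so $\Delta(G[W])$ is topologically a $t$-cycle and $\tilde H_1\simeq\K$. Otherwise, since any induced cycle of $G^c[W]$ would have length exactly $t=|W|$ and would force $G^c[W]=C_t$, $G^c[W]$ has no induced cycle at all, is chordal, and Fr\"oberg gives $\tilde H_1(\Delta(G[W]))=0$. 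Summing over all $W$ of size $t$ then yields the second bullet. The main technical subtlety in this plan lies in calibrating the quantitative bounds in the induction so that a single inductive hypothesis on $|W|$ simultaneously controls the $\tilde H_k$ of the deletion and the $\tilde H_{k-1}$ of the link; once this alignment is in place, both the borderline case of the third bullet and the computation for the fourth bullet drop out with no further work.
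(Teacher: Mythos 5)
Your argument is correct, but there is nothing in this paper to compare it against: Theorem~\ref{nuestro} is imported verbatim from \cite{fernandez-ramos_first_2009} and no proof is given here. What you have produced is a self-contained proof built from exactly the toolkit the paper does develop: Hochster's Formula (\ref{hochsterMult})--(\ref{hochster}) to translate each Betti number into reduced homology of $\Delta(G)[W]$, Fr\"oberg's Theorem~\ref{froberg} to kill the homology of $\Delta(G[W])$ whenever $G^c[W]$ is chordal (which your minimality-of-$t$ argument guarantees for $|W|\leq t-1$, and for $|W|=t$ unless $G^c[W]\simeq C_t$), and the Mayer--Vietoris sequence (\ref{MV}) for $\Delta=\del_\Delta(v)\cup\Star_\Delta(v)$ to run the induction on $|W|$ --- the same decomposition the authors use in Lemma~\ref{lanota} and in the proof of Theorem~\ref{BettiDiagEdgeIdeal}. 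Your calibration is right: bullet one is equivalent to $\tilde H_k(\Delta(G[W]))=0$ for $k\geq1$ and $|W|\leq k+t-2$, the inductive step forces $k\geq2$ when $|W|\geq t$ so that both flanking terms fall under the hypothesis, and the borderline count $|W|=k+t-1$ for bullet three goes through because a non-isolated $v$ makes the link lose at least two vertices. The only point worth a sentence in a written-up version is the degenerate case of (\ref{MV}) when $\link_\Delta(v)$ has no vertices (i.e.\ $W\subset N_G(v)\cup\{v\}$): there $\Delta(G[W])$ is the disjoint union of $\del_\Delta(v)$ and the point $v$, so $\tilde H_k(\Delta(G[W]))\simeq\tilde H_k(\del_\Delta(v))$ for $k\geq1$ and the induction still closes; likewise, when $G[W]$ has no edges one should invoke contractibility of the simplex rather than Fr\"oberg. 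Neither remark affects the validity of the proof.
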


Observe that in the previous theorem, induced cycles in $G^c$ play an important role. That's why we previously
focused on a particular family of edge ideals, those associated to complements of cycles, and gave in \cite[Proposition 3.1]{fernandez-ramos_first_2009}
closed combinatorial formulas for all its graded Betti numbers.

\medskip

Following the same philosophy, we will focus now on graphs that are the bipartite complement of an even cycle since induced even cycles
in the bipartite complement of an arbitrary graph will play a fundamental role later in our main Theorem~\ref{main1}.
Again, for this family of graphs, we describe all the graded Betti numbers of the associated edge ideals
in Theorem~\ref{bettibcciclo}.

\section{Bipartite complement of a cycle of even length}\label{cycle}

The following result is a direct consequence of Propositions \ref{ThirdRowBettiCBC},
\ref{FirstRowBettiCBC} and \ref{SecRowBettiCBC}, that we will prove in this section.

\begin{theorem}\label{bettibcciclo}
The edge ideal associated to the bipartite complement of an even cycle
of length $t:=2s\geq 6$ has regularity 4 and its Betti diagram is
\begin{center}
\rm{
\begin{tabular}{r|c|ccc|cc|c|c|}
                        \multicolumn{1}{c}{}  & \multicolumn{1}{c}{0}              & \multicolumn{1}{c}{1}             & \ldots & \multicolumn{1}{c}{$s-3$}  & \ldots & \multicolumn{1}{c}{$t-5$} & \multicolumn{1}{c}{$t-4$} \\
 \cline{2-5}            \multicolumn{1}{c}{2} & \multicolumn{1}{>{\columncolor[rgb]{0.82,0.82,0.82}} c}{$\beta_{0,2}$} & \multicolumn{1}{>{\columncolor[rgb]{0.82,0.82,0.82}} c}{$\beta_{1,3}$} & \multicolumn{1}{>{\columncolor[rgb]{0.82,0.82,0.82}} c}{\ldots} & \multicolumn{1}{>{\columncolor[rgb]{0.82,0.82,0.82}} c|}{$\beta_{s-3,s-1}$}          &        &                       \multicolumn{2}{c}{}            \\
 \cline{2-2} \cline{6-7}\multicolumn{1}{c}{3} &                                    & \multicolumn{1}{>{\columncolor[rgb]{0.82,0.82,0.82}} c}{$\beta_{1,4}$}                    & \multicolumn{1}{>{\columncolor[rgb]{0.82,0.82,0.82}} c}{\ldots}& \multicolumn{1}{>{\columncolor[rgb]{0.82,0.82,0.82}} c}{\ldots} & \multicolumn{1}{>{\columncolor[rgb]{0.82,0.82,0.82}} c}{\ldots}& \multicolumn{1}{>{\columncolor[rgb]{0.82,0.82,0.82}} c|}{$\beta_{t-5,t-2}$}         & \multicolumn{1}{c}{}      \\
 \cline{3-8}            \multicolumn{1}{c}{4} & \multicolumn{6}{c|}{}                                                                                                                             &          \multicolumn{1}{>{\columncolor[rgb]{0.82,0.82,0.82}} c|}{1}                \\
\cline{8-8}                                                                       \multicolumn{8}{c}{}
\end{tabular}
}
\end{center}
where the nonzero entries are located in the shadowed area. Moreover,
$\beta_{j-2,j}$ for $2\leq j\leq s-1$,
and $\beta_{j-3,j}$ for $4\leq j\leq t-2$, are given respectively
by the following closed combinatorial formulas:
\begin{eqnarray*}
\beta_{j-2,j}&=&\sum_{k=1}^{j-1}\sum_{c=1}^{k}\frac{s}{c}\binom{k-1}{c-1}\binom{s-k-1}{c-1}\binom{s-k-c}{j-k},\\
\beta_{j-3,j}&=&\sum_{m=2}^{\lfloor j/2\rfloor}t\frac{m-1}{m}
\binom{t-j-1}{m-1}
\sum_{a=0}^{j-2m}
\binom{j-m-a-1}{m-1}\binom{t-j-m}{a}.
\end{eqnarray*}
\end{theorem}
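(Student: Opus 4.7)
My approach applies Hochster's formula~(\ref{hochster}) to reinterpret each graded Betti number $\beta_{i,j}$ of $I(\bc{C_{2s}})$ as a sum over subsets $W\subseteq V(\bc{C_{2s}})$ of dimensions $\dim_{\K} \tilde H_{j-i-2}(\Delta[W])$, where $\Delta:=\Delta(\bc{C_{2s}})$. The key structural observation is that for any $W = X'\sqcup Y'$, the bipartite complement of $\bc{C_{2s}}[W]$ within $W$ coincides with the induced subgraph $C_{2s}[W]$, which is either the whole cycle (when $W = V$) or a disjoint union of bipartite paths in $C_{2s}$, with isolated vertices counted as trivial paths. Understanding the homology of $\Delta[W]$ therefore reduces to understanding the independence complex of the bipartite complement of a disjoint union of bipartite paths.

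I would then split the proof into three stages, one per nonzero row of the Betti diagram. For the \emph{top row} ($\beta_{t-4,t}=1$), I would first prove by iterated application of Lemma~\ref{lanota} parts~\ref{latres}--\ref{lacinco} that $\tilde H_k(\Delta[W])=0$ for every $W\neq V$ and every $k\geq 2$, and for $W=V$ and $k\geq 3$: every submatrix of the circulant biadjacency matrix of $\bc{C_{2s}}$ distinct from $M(sK_2)$ admits at least one such reduction, allowing an induction on $|W|$. Then, for $W=V$, repeated applications of Lemma~\ref{lanota}.\ref{lados} (each shifting the homological degree up by one) would collapse $\Delta$ to the independence complex $\Sigma_s$ of $sK_2$, whose top homology is $\K$ by Example~\ref{sigmam}, giving $\tilde H_2(\Delta)\simeq\K$ and hence $\beta_{t-4,t}=1$.

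For the \emph{lower two rows}, Hochster's formula together with the vanishing from Stage~1 yields
\[
\beta_{j-2,j}=\sum_{|W|=j}\dim\tilde H_0(\Delta[W]),\qquad \beta_{j-3,j}=\sum_{|W|=j}\dim\tilde H_1(\Delta[W]),
\]
so the task is to classify the subsets $W$ producing nonzero $\tilde H_0$ or $\tilde H_1$ and to compute these dimensions. Using the decomposition of $C_{2s}[W]$ into $m$ non-trivial path components (respectively $c$ for the linear-strand formula) plus isolated vertices, the binomial factors in the claimed formulas admit the following combinatorial reading: the factor $t$ marks a base vertex on the cycle; $\binom{t-j-1}{m-1}$ and $\binom{t-j-m}{a}$ distribute the $t-j$ unselected vertices into the gaps between components; $\binom{j-m-a-1}{m-1}$ distributes the edges among the $m$ components; and the rational factors $(m-1)/m$ and $s/c$ encode the homological dimension of the contribution together with a quotient by the cyclic symmetry among the components. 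An analogous reading accounts for the formula of the linear strand.

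\emph{Main obstacle.} The hardest step is Stage~1: proving that the reductions of Lemma~\ref{lanota} always apply and that no homology survives above degree~1 except for the single class in $\tilde H_2(\Delta)$ requires a careful case analysis of the zero-patterns in submatrices of the circulant biadjacency matrix, and meticulous tracking of which parts of the lemma shift homological degree and which leave it unchanged. The subsequent matching of the counts from Stages~2--3 with the claimed closed-form binomial expressions is also delicate, and relies on choosing the correct parameterization of the relevant $W$'s (base vertex, number of components, edge-length profile) and invoking standard composition-type identities.
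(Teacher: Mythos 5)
Your overall architecture (Hochster's formula, the observation that the ``complement inside $W$'' is $C_{2s}[W]$, reduction via Lemma~\ref{lanota}, then a combinatorial count organized by the number of components of $C_{2s}[W]$) is the same as the paper's, but Stage~1 contains a concrete error. You claim that repeated applications of Lemma~\ref{lanota}.\ref{lados} collapse $\Delta=\Delta(C_{2s}^{bc})$ to $\Sigma_s=\Delta(sK_2)$ and that this yields $\tilde H_2(\Delta)\simeq\K$. For $s\geq 4$ the hypothesis of Lemma~\ref{lanota}.\ref{lados} is never satisfied by the biadjacency matrix of $C_{2s}^{bc}$: every row and column has exactly two zeros, hence $s-2\geq 2$ ones, so there is no entry equal to $1$ whose entire row and column are otherwise zero. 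Moreover the degree bookkeeping cannot work: $\Sigma_s$ has its only homology in degree $s-1$, and each application of \ref{lados} shifts degree by exactly one, so landing in degree $2$ would require $3-s\leq 0$ applications. The paper instead makes one Mayer--Vietoris step after proving that $\del_\Delta(v)$ is acyclic (itself a nontrivial reduction via Lemma~\ref{lanota}.\ref{lacuatro}), obtaining $\tilde H_i(\Delta)\simeq\tilde H_{i-1}(\link_\Delta(v))$, and then collapses the \emph{link} to $\Sigma_2$; this is what produces $\tilde H_2(\Delta)\simeq\tilde H_1(\Sigma_2)\simeq\K$. Your argument only happens to be correct at $s=3$, where $C_6^{bc}=3K_2$.

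A second gap: the $(m-1)$ factor in the second-row formula requires knowing that a subset $W$ for which $C_{2s}[W]$ has $m\geq 2$ non-isolated components contributes exactly $m-1$ to $\dim\tilde H_1(\Delta[W])$, i.e., that after the Lemma~\ref{lanota} reductions one reaches $(mK_2)^{bc}$ and that $\dim_\K\tilde H_1(\Delta((mK_2)^{bc}))=m-1$ with all other homology vanishing. You treat this number as something ``encoded'' in the formula rather than something to be proved; the paper proves it (Lemma~\ref{lemmaIm}) via the Nerve Theorem applied to the cover of $\Delta((mK_2)^{bc})$ by the two simplices on $X$ and $Y$ together with the $m$ edges $\{x_i,y_i\}$, whose nerve is a suspension of $m$ points. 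Without this computation (and without the identity $|N_{C_{2s}}(W_X)|=|W_X|+|\comp(C_X[W_X])|$ that underpins the linear-strand count), Stages~2--3 remain a plausible parameterization rather than a proof.
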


\medskip
Let $G:=C^{bc}_{2s}$ be the bipartite complement of an even cycle $C_{2s}$ with at least 6 vertices, i.e., $s\geq 3$.
The vertices and edges of the even cycle $C_{2s}$ will be $V:=\{x_1,\ldots,x_s\}\sqcup \{y_1,\ldots,y_s\}$
and $\{\{x_1,y_1\},\{y_1,x_2\},\ldots,\{y_s,x_1\}\}$ respectively along this section. We will
sometimes refer to the two subsets in the bipartition of $V$ as $X$ and $Y$.
The biadjacency matrix $M$ of $G$ has exactly two zero entries
on each row and column:
$$
M=
\left( \footnotesize{
\begin{array}{cccccccc}
 0        &       1   &     \ldots &  \ldots   & 1      &0\\
 0        &       0   &     \ddots &           & \vdots &1\\
 1        &       0   &        0   &  \ddots   & \vdots &\vdots \\
\vdots    &   \ddots  &     \ddots &  \ddots   & 1      &\vdots \\
\vdots    &           &     \ddots &      0    & 0      &1\\
 1        &   \ldots  &     \ldots &      1    & 0      &0\\
\end{array}
}\right).
$$
In order to use Hochster's Formula to determine the graded Betti numbers of $I(G)$, we need to compute the reduced simplicial homologies $\tilde H_{i}(\Delta(G[W]))$ for all subsets $W$ of $V$. The case $W=V$ is solved in Proposition \ref{homDelta}. Its proof requires the following lemma.

\begin{lemma}
For every $v\in V$, $\del_{\Delta(G)}(v)$ is acyclic.
\end{lemma}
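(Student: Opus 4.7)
The strategy is straightforward: use the vertex transitivity of $G=C_{2s}^{bc}$ to reduce to a single case, and then apply Lemma~\ref{lanota} iteratively to peel off pairs of vertices from the biadjacency matrix of $G[V\setminus\{v\}]$ until only a cone remains.

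First I would observe that every automorphism of the cycle $C_{2s}$ either preserves or swaps the two color classes $X$ and $Y$, and in either case sends bipartite non-edges to bipartite non-edges; hence the dihedral automorphism group of $C_{2s}$ is contained in the automorphism group of $G$ and already acts transitively on $V$. It is therefore enough to prove that $\del_{\Delta(G)}(x_1)=\Delta(G')$ is acyclic, where $G':=G[V\setminus\{x_1\}]$. The biadjacency matrix $M'$ of $G'$ is $M$ with its top row deleted, and its zero entries sit precisely at positions $(x_i,y_{i-1})$ and $(x_i,y_i)$ for $i=2,\ldots,s$.

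The column of $y_1$ in $M'$ carries a single zero, located at row $x_2$, while row $x_2$ has a second zero at column $y_2$; Lemma~\ref{lanota}.\ref{lacuatro} (column version) then erases $y_1$ without changing reduced homology. After this removal row $x_2$ has a unique zero at $y_2$ and column $y_2$ still contains a second zero at $x_3$, so Lemma~\ref{lanota}.\ref{lacuatro} (row version) erases $x_2$ as well. I would then iterate this alternation, deleting $y_k$ and $x_{k+1}$ in turn for $k=2,3,\ldots,s-2$: at each stage the surviving submatrix has exactly the same staircase pattern of zeros, shifted by one, so the hypotheses of Lemma~\ref{lanota}.\ref{lacuatro} are automatically re-established.

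After $2(s-2)$ such reductions only the subgraph $G'':=G[\{x_s,y_{s-1},y_s\}]$ remains. Since $x_s$ is non-adjacent in $G$ to both $y_{s-1}$ and $y_s$, the unique row of the biadjacency matrix of $G''$ is identically zero, and Lemma~\ref{lanota}.\ref{launo} (equivalently, $\Delta(G'')$ is a cone with apex $x_s$ and Proposition~\ref{homConeSusp}.\ref{homCone}) gives $\tilde H_i(\Delta(G''))=0$ for every $i\geq 0$. Chaining the isomorphisms produced by the successive applications of Lemma~\ref{lanota}.\ref{lacuatro} completes the argument. There is no genuine obstacle here; the only book-keeping to watch is that the staircase pattern really is preserved under each alternating removal, which follows from the local description of the zero positions and is a direct verification.
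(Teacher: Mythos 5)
Your proof is correct and follows essentially the same route as the paper: reduce to $v=x_1$ by symmetry, then repeatedly apply Lemma~\ref{lanota}.\ref{lacuatro} to strip vertices from the biadjacency matrix until a matrix with an all-zero row or column remains, which is handled by Lemma~\ref{lanota}.\ref{launo}. The only (immaterial) difference is that the paper peels the staircase symmetrically from both ends (first and last columns, then first and last rows), whereas you peel from one end only.
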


\begin{proof}
Without loss of generality, let's choose $v=x_1$. As observed in Definition~\ref{defDelLinkStar},
$\del_{\Delta(G)}(x_1)$ is the independence complex of $G[V\setminus\{x_1\}]$
whose biadjacency matrix $N$ is obtained by removing the first row of $M$. Observe that the first and last columns of $N$
satisfy the condition in Lemma~\ref{lanota}.\ref{lacuatro} and hence can be removed. Again, the first and last rows of this
new matrix satisfy the same condition and we remove them. Recursively,
when $s$ is odd (respectively even), we reduce the computation of the homology to the case of the independence complex of
a graph whose biadjacency matrix is a
$2\times 3$ (respectively $3\times 2$) matrix whose central column (respectively row) has its two entries equal to zero and
$\del_{\Delta(G)}(v)$ is acyclic by Lemma~\ref{lanota}.\ref{launo}.
\end{proof}

\begin{proposition}\label{homDelta}
$\displaystyle{
\dim_{ \K}(\tilde H_{i}(\Delta(G)))=\left\{\begin{array}{ll}  1 &\hbox{ if }  i=2, \\ 0 & \hbox{ otherwise } .\end{array} \right.
}$
\end{proposition}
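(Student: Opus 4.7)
The plan is to compute the reduced homology of $\Delta:=\Delta(G)$ through a single application of the Mayer-Vietoris sequence (\ref{MV}) at the vertex $v=x_1$. The preceding lemma guarantees that $\del_\Delta(x_1)$ is acyclic, and $\link_\Delta(x_1)\ne\emptyset$ (for instance, $\{y_1\}$ belongs to it since $y_1$ and $x_1$ are adjacent in $C_{2s}$, hence not adjacent in $G$). The sequence (\ref{MV}) therefore degenerates into isomorphisms $\tilde H_i(\Delta)\simeq\tilde H_{i-1}(\link_\Delta(x_1))$ for every $i\geq 1$, together with $\tilde H_0(\Delta)=0$. It thus suffices to verify that $\dim_\K\tilde H_1(\link_\Delta(x_1))=1$ and that all other reduced homology groups of the link vanish.

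Next I would describe $\link_\Delta(x_1)$ as the independence complex of $H:=G[W]$ where $W=V\setminus(N_G(x_1)\cup\{x_1\})=\{x_2,\ldots,x_s,y_1,y_s\}$. Since in the cycle $C_{2s}$ the vertex $y_1$ is adjacent only to $x_1$ and $x_2$, and $y_s$ only to $x_s$ and $x_1$, the biadjacency matrix of $H$, with rows indexed by $y_1,y_s$ and columns by $x_2,\ldots,x_s$, reads
\[
\begin{pmatrix}
 0 & 1 & 1 & \cdots & 1 & 1 \\
 1 & 1 & \cdots & 1 & 1 & 0
\end{pmatrix},
\]
so every interior column $x_3,\ldots,x_{s-1}$ has both entries equal to $1$. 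I would then apply Lemma~\ref{lanota}.\ref{latres} repeatedly to delete those $s-3$ interior columns one by one (no deletion is needed when $s=3$), each step preserving the reduced homology of the independence complex. What remains is the $2\times 2$ matrix $\bigl(\begin{smallmatrix}0&1\\1&0\end{smallmatrix}\bigr)$, corresponding to the graph $2K_2$ on $\{x_2,x_s,y_1,y_s\}$ with edges $\{y_1,x_s\}$ and $\{y_s,x_2\}$. Hence $\tilde H_i(\link_\Delta(x_1))\simeq\tilde H_i(\Sigma_2)$ for every $i$, and Example~\ref{sigmam} gives $\dim_\K\tilde H_1(\Sigma_2)=1$ and $\tilde H_i(\Sigma_2)=0$ otherwise. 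Combining this with the isomorphisms of the first paragraph yields the claimed value of $\dim_\K\tilde H_i(\Delta)$.

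The proof is essentially mechanical once the preceding lemma (acyclicity of $\del_\Delta(v)$ for every vertex) is available: the present statement is just one Mayer-Vietoris step followed by the reduction rules of Lemma~\ref{lanota}. The only point requiring some care is the verification that the interior columns of the biadjacency matrix of $H$ are indeed all-ones (which follows immediately from the cycle structure of $C_{2s}$) and that the degenerate low-dimensional cases ($s=3,4$) still fit the pattern.
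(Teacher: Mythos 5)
Your proof is correct and follows essentially the same route as the paper: one Mayer--Vietoris step at a vertex using the acyclicity of the deletion, then a reduction of the link to $\Sigma_2$ via Lemma~\ref{lanota}.\ref{latres} and Example~\ref{sigmam}. The only cosmetic difference is that the paper gets $\tilde H_0(\Delta)=0$ directly from connectedness of $\Delta(G)$ rather than from the tail of the Mayer--Vietoris sequence.
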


\begin{proof}
Since $\Delta(G)$ is connected, $\dim_{ \K}(\tilde H_{0}(\Delta(G)))=0$. For $i>0$, using
the Mayer-Vietoris sequence (\ref{MV}), the previous lemma implies that
$\tilde H_i(\dd(G))\simeq \tilde H_{i-1}(\link_{\dd(G)}(v))$
for every vertex $v$ of $G$ and $i>0$. The biadjacency matrix (or its transpose)
of $G[V\setminus(N_G(v)\cup\{v\})]$ is a $2\times (s-1)$ matrix with exactly two zero entries, one in each row and located on two different columns.
Applying Lemma~\ref{lanota}.\ref{latres} (if $s>3$) as many times as necessary, one gets that
$\tilde H_i(\dd(G))\simeq \tilde H_{i-1}(\Sigma_2)$ and the result follows from Example~\ref{sigmam}.
\end{proof}

Let $W$ be now a proper subset of $V$, $W\subsetneq V=X\sqcup Y$. Set $W_{X}:=W\cap X$, $W_{Y}:=W\cap Y$, and
denote by $k_W$ the number of connected components of the graph $C_{2s}[W]$ that are not isolated vertices.
Note that if $k_W\neq 0$, then $W_{X}\neq\emptyset$ and $W_{Y}\neq\emptyset$.

\begin{proposition}\label{componentes}
  \begin{enumerate}
\item \label{emptyacyclic}
If $W_{X}=\emptyset $ or $W_{Y}=\emptyset$ then $\Delta(G[W])$ is acyclic.
\item \label{nonemptyKzero}
If $W_{X}\neq\emptyset $, $W_{Y}\neq\emptyset$ and $k_W=0$
then
$$
  \dim_{ \K}(\tilde H_{i}(\Delta(G[W])))=\left\{\begin{array}{ll}  1 &\hbox{ if }  i=0, \\ 0 & \hbox{ otherwise } .\end{array} \right.
$$
\item \label{nonemptyKnonzero}
If $k_W>0$ then
$$
 \dim_{ \K}(\tilde H_{i}(\Delta(G[W])))=\left\{\begin{array}{ll}  k_W-1 &\hbox{ if }  i=1, \\ 0 & \hbox{ otherwise } . \end{array} \right.
$$
 \end{enumerate}
\end{proposition}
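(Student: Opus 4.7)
My plan is to handle the three items separately. Items (1) and (2) will follow from explicit descriptions of $G[W]$ and its independence complex, while item (3) requires an induction on $|W|$ whose inductive step combines the reduction rules of Lemma~\ref{lanota} with a Mayer--Vietoris computation.

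For item (1), since $G$ is bipartite, if $W_X=\emptyset$ or $W_Y=\emptyset$ then $G[W]$ has no edges, so $\Delta(G[W])$ is the full simplex on $W$ and is acyclic. For item (2), the hypothesis $k_W=0$ forces every pair $\{x_i,y_j\}$ with $x_i\in W_X$ and $y_j\in W_Y$ to lie in $E(G)$ (otherwise it would be a $C_{2s}$-edge inside $W$), so $G[W]=K_{|W_X|,|W_Y|}$ and $\Delta(G[W])$ is the disjoint union of the two full simplices on $W_X$ and on $W_Y$, which has the homotopy type of $S^0$, giving $\tilde H_0=\K$ and higher reduced homologies zero.

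For item (3), I would induct on $|W|$, the base being $|W|=2$ with $W$ a single $C_{2s}$-edge, for which $\Delta(G[W])$ is a $1$-simplex and the statement holds trivially since $k_W-1=0$. In the inductive step I distinguish three configurations of $C_{2s}[W]$: \emph{(A)} some vertex $u$ of $W$ is isolated in $C_{2s}[W]$, in which case its row or column in the biadjacency matrix of $G[W]$ is all-ones and Lemma~\ref{lanota}.\ref{latres} reduces $W$ to $W\setminus\{u\}$ without changing $k_W$ or the homology; \emph{(B)} no such isolated vertex exists but some component of $C_{2s}[W]$ has at least three vertices, so picking an endpoint $u$ of such a component yields a row with a unique zero in a column that contains another zero (from the next vertex of the path), and Lemma~\ref{lanota}.\ref{lacuatro} again reduces to $W\setminus\{u\}$ while preserving $k_W$; \emph{(C)} every component of $C_{2s}[W]$ is a single edge, so $W$ is a disjoint union of $k_W$ isolated $C_{2s}$-edges $\{u_j,v_j\}$ with $\{u_i,v_j\}\notin E(C_{2s})$ for $i\neq j$.

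Case~(C) is the main obstacle. Setting $\Delta:=\Delta(G[W])$, I plan to apply the Mayer--Vietoris sequence~(\ref{MV}) at the vertex $u_1$. First, $\Star_\Delta(u_1)$ is a cone and hence acyclic. Next, $\link_\Delta(u_1)=\Delta(G[\{v_1,u_2,\ldots,u_{k_W}\}])$ splits as the full simplex on $\{u_2,\ldots,u_{k_W}\}$ together with the isolated vertex $v_1$ (because $v_1$ is adjacent in $G$ to every $u_j$ with $j\geq 2$, since no such pair is a $C_{2s}$-edge), so $\tilde H_0(\link_\Delta(u_1))=\K$ and higher reduced homologies vanish. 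Finally, $\del_\Delta(u_1)=\Delta(G[W\setminus\{u_1\}])$, where $v_1$ has become isolated in $C_{2s}[W\setminus\{u_1\}]$, so Lemma~\ref{lanota}.\ref{latres} further reduces it to $\Delta(G[W\setminus\{u_1,v_1\}])$, a Case~(C) instance with $k_W-1$ edges whose homology is given by the induction hypothesis. Feeding these into~(\ref{MV}) and observing that $\Delta$ is connected (its $1$-skeleton contains the complete graphs on $W_X$ and on $W_Y$ together with the matching edges $\{u_j,v_j\}$) yields the short exact sequence $0\to\K^{k_W-2}\to\tilde H_1(\Delta)\to\K\to 0$, whence $\tilde H_1(\Delta)=\K^{k_W-1}$ and all higher reduced homologies vanish.
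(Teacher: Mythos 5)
Your argument is correct, and items (1) and (2) plus the reduction phase of item (3) match the paper's proof: the paper also observes that $\Delta(G[W])$ is a simplex in case (1), reduces case (2) to $\Sigma_1$ via Lemma~\ref{lanota}.\ref{latres} (you instead identify $\Delta(G[W])$ directly as two disjoint simplices, which is equivalent), and in case (3) strips off isolated vertices of $C_{2s}[W]$ with Lemma~\ref{lanota}.\ref{latres} and endpoints of longer path components with Lemma~\ref{lanota}.\ref{lacuatro}, exactly as in your cases (A) and (B), arriving at $\tilde H_{i}(\Delta(G[W]))\simeq \tilde H_{i}(\Delta((k_W K_2)^{bc}))$. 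Where you genuinely diverge is in the terminal configuration: the paper isolates this as Lemma~\ref{lemmaIm} and proves it with the Nerve Theorem, covering $\Theta_m=\Delta((mK_2)^{bc})$ by the two simplices on $X$ and $Y$ together with the matching edges, so that the nerve is the suspension $\Sigma_x^y$ of $m$ discrete points and Proposition~\ref{homConeSusp}.\ref{homSusp} finishes the computation. You instead run a second induction (on $k_W$) using the Mayer--Vietoris sequence (\ref{MV}) at a matching vertex $u_1$: your identifications of $\link_\Delta(u_1)$ as a simplex plus an isolated vertex and of $\del_\Delta(u_1)$ as a smaller instance of the same configuration are both correct, and the resulting exact sequence $0\to\K^{k_W-2}\to\tilde H_1(\Delta)\to\K\to 0$ does give $\dim\tilde H_1=k_W-1$ with all other reduced homology vanishing. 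Your route is more elementary in that it uses only tools already deployed elsewhere in the paper (Lemma~\ref{lanota} and the sequence (\ref{MV})) and avoids importing the Nerve Theorem, at the price of a longer case analysis; the paper's nerve argument is shorter and more conceptual, since it exhibits the homotopy type of $\Theta_m$ as a suspension in one stroke.
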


\begin{remark}\label{rmk}
Observe that
$G^{c}=C_{2s}\cup K_{X} \cup K_{Y}$,
where $K_A$ denotes the complete graph on a set of vertices $A\subset V$.
Thus,
$G^{c}[W]=C_{2s}[W]\cup K_{W_X} \cup K_{W_Y}$.
Since $K_{W_X}$ and $K_{W_Y}$ are connected, if one of them is empty or if they are connected to each other in $G^{c}[W]$, i.e.,
if $k_W\neq 0$, then $G^{c}[W]$ is connected. Otherwise, $K_{W_X}$ and $K_{W_Y}$ are its connected components.
Thus, the condition in Proposition~\ref{componentes}.\ref{nonemptyKzero} is satisfied if and only if
$G^{c}[W]$ is not connected. When $W_X\not=\emptyset$ and $W_Y\not=\emptyset$, denote by $M[W]$ the biadjacency matrix of $G[W]$. It is easy to check that if $W_X\neq\emptyset$ and $W_Y\neq\emptyset$, then
\begin{center}
$k_W=0$ $\Leftrightarrow$ $N_{C_{2s}}(W_X)\cap W_Y=\emptyset$ $\Leftrightarrow$ $M[W]$ has no zero entries.
\end{center}
\end{remark}

\begin{proof}[{Proof of Proposition~\ref{componentes}}]
If $W_{X}=\emptyset $ or $W_{Y}=\emptyset$, $\Delta(G[W])$ is a simplex and \ref{emptyacyclic} follows.
Assume now that
$W_{X}\neq\emptyset $, $W_{Y}\neq\emptyset$ and $k_W=0$. Then, $M[W]$ has no zero entries by Remark~\ref{rmk} and,
by Lemma~\ref{lanota}.\ref{latres}, for all $i\geq 0$,
$\tilde H_{i}(\Delta(G[W]))\simeq\tilde H_{i}(\Delta(K_2))=\tilde H_{i}(\Sigma_1)$. So \ref{nonemptyKzero} follows from Example~\ref{sigmam}.

\smallskip
Assume now that $M[W]$ has at least one zero entry.
First observe that the number of zero entries
in any row and column of $M[W]$
is at most two (and that for at least two of the columns or rows, it is one).
By Lemma~\ref{lanota}.\ref{latres}, the dimension of the reduced homologies will not change if we remove from $M[W]$ any row and any column with no zero entry. In other words, since a row or a column of $M[W]$ with no zero entry corresponds to an isolated vertex in $C_{2s}[W]$, if $W'$ is the subset of $W$ formed by all the elements in $W$ that
are not isolated vertices in $C_{2s}[W]$, one has that
$\tilde H_{i}(\Delta(G[W]))\simeq\tilde H_{i}(\Delta(G[W']))$.
Moreover, $k_W=k_{W'}$. Using now Lemma~\ref{lanota}.\ref{lacuatro}, one can remove from $M[W']$ any row (resp. column) with exactly one zero entry and such that
in the column (resp. row) where this zero entry is located, there is another zero entry. Such a row or column of $M[W']$ corresponds to
a vertex of $C_{2s}[W']$ of degree one whose (unique) neighbor is of degree two. Removing such a vertex does not change the number of
connected components of $C_{2s}[W']$ and it creates in $C_{2s}[W']$ a vertex of the same kind, until we reach a vertex
of degree one whose neighbor also has degree one. Thus, $\tilde H_{i}(\Delta(G[W]))\simeq \tilde H_{i}(\Delta((k_W K_2)^{bc}))$ for all $i\geq 0$. The result is now a direct consequence of the following technical lemma.
\end{proof}

\begin{lemma}\label{lemmaIm}
For all $m\geq 1$,
$\displaystyle{
\dim_{\K}(\tilde H_{i}(\Delta((mK_2)^{bc})))=\left\{\begin{array}{ll} m-1 &\hbox{ if } i=1, \\ 0 & \hbox{ otherwise.} \end{array} \right.
}$
\end{lemma}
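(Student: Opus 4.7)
The plan is to proceed by induction on $m\geq 1$. Set $\Delta_m:=\Delta((mK_2)^{bc})$ and label the vertices $x_1,\ldots,x_m,y_1,\ldots,y_m$ so that the biadjacency matrix $M$ of $(mK_2)^{bc}$ has $0$'s on the diagonal and $1$'s elsewhere. The base case $m=1$ is immediate: $(K_2)^{bc}$ has no edges, so $\Delta_1$ is the full $1$-simplex and all its reduced homologies vanish, in agreement with the stated formula.

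For the inductive step ($m\geq 2$), I would apply the Mayer-Vietoris sequence~(\ref{MV}) at the vertex $x_1$, so the task reduces to identifying the homology of both $\del_{\Delta_m}(x_1)$ and $\link_{\Delta_m}(x_1)$ in terms of $\Delta_{m-1}$. The induced subgraph defining $\del_{\Delta_m}(x_1)$ has biadjacency matrix equal to $M$ with its first row deleted; the column corresponding to $y_1$ is then entirely made of $1$'s, so Lemma~\ref{lanota}.\ref{latres} allows removing $y_1$ without changing the homology. The remaining graph is exactly $((m-1)K_2)^{bc}$, so $\tilde H_i(\del_{\Delta_m}(x_1))\simeq \tilde H_i(\Delta_{m-1})$ for all $i\geq 0$. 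On the other hand, $\link_{\Delta_m}(x_1)$ is the independence complex of the induced subgraph on $\{x_2,\ldots,x_m,y_1\}$, which is the star $K_{1,m-1}$ centered at $y_1$. Its independence complex decomposes as the disjoint union of the simplex on $\{x_2,\ldots,x_m\}$ with the isolated vertex $\{y_1\}$, so $\dim_\K \tilde H_0(\link_{\Delta_m}(x_1))=1$ and $\tilde H_i(\link_{\Delta_m}(x_1))=0$ for all $i\geq 1$.

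Plugging these data into~(\ref{MV}) yields $\tilde H_i(\Delta_m)\simeq \tilde H_i(\Delta_{m-1})=0$ for all $i\geq 2$ by the induction hypothesis, while in low degrees one reads off
\[
0\to \tilde H_1(\Delta_{m-1})\to \tilde H_1(\Delta_m)\to \tilde H_0(\link_{\Delta_m}(x_1))\to \tilde H_0(\Delta_{m-1})\to \tilde H_0(\Delta_m)\to 0,
\]
which by induction becomes $0\to \K^{m-2}\to \tilde H_1(\Delta_m)\to \K\to 0\to \tilde H_0(\Delta_m)\to 0$. Hence $\tilde H_0(\Delta_m)=0$ and $\dim_\K \tilde H_1(\Delta_m)=m-1$, completing the induction. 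No step presents a real obstacle; the main point to verify carefully is the decomposition of $\link_{\Delta_m}(x_1)$ as a simplex plus an isolated vertex, since this is precisely what produces the dimension jump from $m-2$ to $m-1$ in the long exact sequence.
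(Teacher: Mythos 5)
Your argument is correct, but it takes a genuinely different route from the paper's. The paper's proof is non-inductive: it covers $\Theta_m:=\Delta((mK_2)^{bc})$ by its facets (the two simplices on $X$ and on $Y$ together with the $m$ edges $\{x_i,y_i\}$), observes that the nerve of this cover is the suspension of $m$ discrete points, and concludes in one step from the Nerve Theorem and Proposition~\ref{homConeSusp}.\ref{homSusp}; this also exhibits the homotopy type (a wedge of $m-1$ circles). You instead induct on $m$ via the Mayer--Vietoris sequence (\ref{MV}) at $x_1$, and every identification you make checks out: deleting the row of $x_1$ leaves the column of $y_1$ all ones, so Lemma~\ref{lanota}.\ref{latres} gives $\tilde H_i(\del_{\Theta_m}(x_1))\simeq\tilde H_i(\Theta_{m-1})$; the link is the independence complex of the star $K_{1,m-1}$ on $\{x_2,\ldots,x_m,y_1\}$, hence a simplex plus an isolated vertex with $\dim_\K\tilde H_0=1$ and no higher homology; and the long exact sequence (valid since the link is nonempty for $m\geq 2$) then forces $\dim_\K\tilde H_1(\Theta_m)=(m-2)+1=m-1$ and vanishing in all other degrees. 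Your version is more elementary in that it stays entirely within the tools already set up in Section~\ref{pre} (it parallels the inductive computation of $\tilde H_*(\Sigma_m)$ in Example~\ref{sigmam}) and avoids importing the Nerve Theorem; the paper's version is shorter once that theorem is granted and identifies the space up to homotopy rather than only its homology.
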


\begin{proof}
The biadjacency matrix of the graph $(mK_2)^{bc}$ is an $m\times m$ matrix whose entries are all 1 except the
ones on the principal diagonal that are zero. Denote by $\Theta_m$ its independence complex,
$\Theta_m:=\Delta((mK_2)^{bc})$.
Since $\Theta_m$ is connected,
$\dim_{\K}(\tilde H_{0}(\Theta_m))=0$ for all $m\geq 1$.

\smallskip
In order to determine the homology for $i\geq 1$, consider the family of subcomplexes of $\Theta_m$,
$F=\{\Theta_m[X],\Theta_m[Y],\{x_1,y_1\},\ldots,\{x_m,y_m\}\}$ whose elements we index by
$x,y,z_1,\ldots,z_m$. Recall that the {\it nerve} of $F$, $N(F)$, is the simplicial complex on the
vertex set $V_F:=\{x,y,z_1,\ldots,z_m\}$ whose faces are the subsets of $V_F$ such that the intersection of the
corresponding elements in $F$ is non empty.
The simplicial complex $N(F)$ has $2m$ facets, $\{x,z_i\}$ and $\{y,z_i\}$ for all $i=1,\ldots,m$.
Since $\Theta_m=\bigcup_{i\in V_F}F_i$,
applying the Nerve Theorem (see, for example, \cite[Theorem 10.6]{bjorner_topological_1995}), one gets that
$\tilde H_{i}(\Theta_m)\simeq \tilde H_{i}(N(F))$ for all $i\geq 0$.
On the other hand, $N(F)=\displaystyle{\Sigma_{x}^{y}\langle \{z_{1}\},\ldots,\{z_{m}\}\rangle}$
and hence, by Proposition \ref{homConeSusp}.\ref{homSusp},
$\tilde H_{i}(N(F))\simeq \tilde H_{i-1}(\langle \{z_{1}\},\ldots,\{z_{m}\}\rangle)$ for all $i\geq 1$
and the result follows.
\end{proof}

As a straightforward consequence, one gets that the last row of the Betti diagram of $I(C_{2s}^{bc})$ with a nonzero
entry is the one indexed by 4, i.e., $\reg{I(C_{2s}^{bc})}=4$, and that $\beta_{2s-4,2s}(I(C_{2s}^{bc}))=1$
is the only nonzero entry on this row.

\begin{proposition}\label{ThirdRowBettiCBC}
\begin{itemize}
\item $\beta_{i,j}(I(C_{2s}^{bc}))=0$ if $j>i+4$;
 \item $
\beta_{i,i+4}(I(C_{2s}^{bc}))=\left\{\begin{array}{ll} 1 & \hbox{ if }  i=2s-4\,, \\ 0 &\hbox{ otherwise }\,. \end{array} \right.
$
\end{itemize}
\end{proposition}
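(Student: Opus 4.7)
The plan is a direct application of Hochster's Formula (\ref{hochster}) together with the homology computations already carried out in Propositions \ref{homDelta} and \ref{componentes}. By Hochster's Formula, for every $i\geq 0$ and every $j\geq 2$,
$$
\beta_{i,j}(I(G))=\sum_{W\subset V,\ |W|=j}\dim_{\K}\tilde H_{j-i-2}(\Delta(G)[W])\,,
$$
so setting $\ell:=j-i-2$, the statement reduces to controlling in which homological degrees $\ell$ the induced subcomplexes $\Delta(G)[W]=\Delta(G[W])$ can have nonzero reduced homology.

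The first step is to list the possible nonzero homologies of $\Delta(G[W])$ as $W$ ranges over subsets of $V$. For $W=V$, Proposition~\ref{homDelta} says that the only nonzero reduced homology is $\tilde H_2(\Delta(G))$, which is one-dimensional. For every proper subset $W\subsetneq V$, Proposition~\ref{componentes} shows that nonzero reduced homology can occur only in degree $\ell=0$ (when $W_X$ and $W_Y$ are nonempty and $k_W=0$) or in degree $\ell=1$ (when $k_W>0$); no proper induced subcomplex has any nonzero $\tilde H_\ell$ for $\ell\geq 2$.

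With this in hand, the first bullet is immediate: if $j>i+4$, then $\ell=j-i-2\geq 3$, and every term in the Hochster sum vanishes, hence $\beta_{i,j}(I(G))=0$. For the second bullet, when $j=i+4$ one has $\ell=2$, and by the previous observation the only $W$ that can contribute is $W=V$, which forces $|W|=2s$, i.e.\ $j=2s$ and $i=2s-4$. In that unique case the contribution is $\dim_{\K}\tilde H_2(\Delta(G))=1$ by Proposition~\ref{homDelta}, whereas for any other $i$ there is no subset $W$ of size $i+4$ with $\ell=2$ homology, so $\beta_{i,i+4}(I(G))=0$.

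There is essentially no obstacle here: the entire argument is a bookkeeping consequence of the homology classification already established in Propositions~\ref{homDelta} and \ref{componentes}. The only thing one needs to remember is that $\ell=2$ homology appears \emph{only} at the top, forcing both the step $i=2s-4$ and the degree $j=2s$, which simultaneously pins down the top row of the Betti diagram and rules out any Betti number strictly above it.
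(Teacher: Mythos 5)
Your proposal is correct and follows exactly the paper's own argument: both combine Propositions~\ref{homDelta} and \ref{componentes} to conclude that $\tilde H_\ell(\Delta(G[W]))$ vanishes for $\ell\geq 3$ for every $W$, and that $\ell=2$ homology occurs only for $W=V$ with dimension one, then read off the claim from Hochster's Formula (\ref{hochster}). No differences worth noting.
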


\begin{proof}
Putting together Propositions \ref{homDelta} and \ref{componentes}, one has that for every subset $W$ of $V$,
$\dim_{ \K}(\tilde H_{i}(\Delta(G[W])))=0$ for all $i\notin\{0,1,2\}$. Moreover,
$\dim_{ \K}(\tilde H_{2}(\Delta(G[W])))\neq 0$ if and only if $W=V$ and
$\dim_{ \K}(\tilde H_{2}(\Delta(G)))=1$. The result then follows from Hochster's Formula (\ref{hochster}).
\end{proof}

\medskip

In order to complete the description of the Betti diagram of $I(C_{2s}^{bc})$, one has to determine
the graded Betti numbers on the first two rows, i.e., $\beta_{i,j}$ for $i+2\leq j\leq i+3$.

\medskip

We start with the first row. Using Hochster's Formula (\ref{hochster}) and
Proposition~\ref{componentes}.\ref{nonemptyKzero}, one needs to determine all the proper subsets $W$ of $V$ such that
$W_X\neq\emptyset$, $W_Y\neq\emptyset$ and $G^{c}[W]$ is not connected.
Indeed, $\beta_{i,i+2}(I(G))$ is the number of induced subgraphs $G^{c}[W]$ on $i+2$ vertices
that are non connected.

\medskip
Let's denote by $C_X$ the cycle on the vertex set $X$ whose edges are $\{x_1,x_2\}$, $\{x_2,x_3\}$, \ldots, $\{x_s,x_1\}$. Note that
the edges of $C_X$ correspond to the pairs $\{x_i,x_j\}$ of elements in $X$ such that
$N_{C_{2s}}(x_{i})\cap N_{C_{2s}}(x_{j})\not=\emptyset$.

\begin{lemma}\label{LemmaNonConnected}
Assume that $G^{c}[W]$ is not connected.
\begin{enumerate}
\item\label{InLemmaNonConn1}
There exists
$x\in W_{X}$ such that $N_{C_{2s}}(x)\not \subset N_{C_{2s}}(W_{X}\backslash \{x\})$.
\item\label{InLemmaNonConn2}
 $|N_{C_{2s}}(W_X)|=|W_X|+|\comp(C_X[W_X])|$ where $\comp(C_X[W_X])$ is the set of connected components of
 $C_X[W_X]$.
\end{enumerate}
\end{lemma}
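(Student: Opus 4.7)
The plan is to exploit the explicit cyclic structure of $C_{2s}$, using the observation that each $x_i\in X$ has only two neighbors in $Y$, namely $y_{i-1}$ and $y_i$ (indices mod $s$), and those two neighbors are respectively shared with $x_{i-1}$ and $x_{i+1}$ only. Before doing anything else, I would invoke Remark~\ref{rmk}: since $G^c[W]$ is disconnected we have $W_X\ne\emptyset$, $W_Y\ne\emptyset$ and $N_{C_{2s}}(W_X)\cap W_Y=\emptyset$. Because $W_Y$ is non-empty and contained in $Y\setminus N_{C_{2s}}(W_X)$, this forces $N_{C_{2s}}(W_X)\subsetneq Y$, and consequently $W_X\subsetneq X$. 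This strict inclusion is the key ingredient for both parts.

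For part~\ref{InLemmaNonConn1}, the observation above gives the equivalence
\[
N_{C_{2s}}(x_i)\subset N_{C_{2s}}(W_X\setminus\{x_i\})\quad\Longleftrightarrow\quad \{x_{i-1},x_{i+1}\}\subset W_X,
\]
because $y_{i-1}$ can only be recovered from $x_{i-1}$ and $y_i$ only from $x_{i+1}$ among the remaining vertices of $X$. If part~\ref{InLemmaNonConn1} failed, this equivalence would hold for every $x_i\in W_X$, so $W_X$ would be closed under taking neighbors in $C_X$; since $C_X$ is connected (it is a cycle), this would force $W_X=X$, contradicting $W_X\subsetneq X$.

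For part~\ref{InLemmaNonConn2}, since $W_X\subsetneq X$ the induced subgraph $C_X[W_X]$ is a disjoint union of paths (no full cycle can occur). I would label the connected components $P_1,\ldots,P_k$, where $k=|\comp(C_X[W_X])|$. A path $P$ consisting of $v$ cyclically consecutive vertices $x_i,x_{i+1},\ldots,x_{i+v-1}$ has $C_{2s}$-neighborhood $\{y_{i-1},y_i,\ldots,y_{i+v-1}\}$, contributing exactly $v+1$ vertices. Two distinct components are separated in $C_X$ by at least one vertex not in $W_X$, say $x_j$, and the neighborhoods of the two components end at $y_{j-2}$ and start at $y_j$ respectively, hence are disjoint. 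Summing over components gives
\[
|N_{C_{2s}}(W_X)|=\sum_{i=1}^{k}(|V(P_i)|+1)=|W_X|+k.
\]

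The mildly delicate point is simply to check that no wrap-around in the cycle produces an unexpected overlap between the neighborhoods of two different path components; the argument above handles this uniformly once we know $W_X$ is a proper subset of $X$, so I do not expect any serious obstacle.
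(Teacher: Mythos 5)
Your proof is correct. For part \ref{InLemmaNonConn1} you follow essentially the paper's own route: both arguments observe that $N_{C_{2s}}(x_i)\subset N_{C_{2s}}(W_X\setminus\{x_i\})$ is equivalent to $N_{C_X}(x_i)\subset W_X$, so that failure of the claim would make $W_X$ closed under $C_X$-neighbours, hence equal to all of $X$ by connectivity of the cycle $C_X$, contradicting the disconnectedness of $G^c[W]$ via Remark~\ref{rmk}. For part \ref{InLemmaNonConn2} you genuinely diverge: the paper proves the identity by induction on $|W_X|$, peeling off the vertex $x_0$ supplied by part \ref{InLemmaNonConn1} and tracking how $|N_{C_{2s}}(W_X)|$ and $|\comp(C_X[W_X])|$ change in the two cases ($x_0$ an endpoint of a path component of $C_X[W_X]$ versus $x_0$ isolated in it), whereas you decompose $C_X[W_X]$ directly into path components and count each neighbourhood as an arc of $|V(P_i)|+1$ consecutive vertices of $Y$. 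Your version is more direct and makes the geometric content visible (each component's neighbourhood is an interval, and distinct intervals are disjoint because components are separated by at least one missing vertex of $X$); the paper's induction buys the reuse of part \ref{InLemmaNonConn1} and avoids any explicit wrap-around discussion, at the cost of a case analysis. One cosmetic slip in your write-up: with your convention $N_{C_{2s}}(x_i)=\{y_{i-1},y_i\}$, if $x_j\notin W_X$ separates two consecutive components, the earlier component's neighbourhood ends at $y_{j-1}$ (not $y_{j-2}$) while the next one starts at $y_j$; the two sets are still disjoint, so the conclusion stands.
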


\begin{proof}
If $N_{C_{2s}}(x) \subset N_{C_{2s}}(W_X\backslash \{x\})$ for some $x\in W_{X}$, then $N_{C_{X}}(x) \subset W_X$. Thus, if
$N_{C_{2s}}(x) \subset N_{C_{2s}}(W_X\backslash \{x\})$ for all $x\in W_{X}$, then $W_X=X$
and one can not have $W_Y\neq\emptyset$ and $N_{C_{2s}}(W_X)\cap W_Y=\emptyset$. This implies that
$G^{c}[W]$ is connected by Remark~\ref{rmk} and \ref{InLemmaNonConn1} follows.

\smallskip
We will prove \ref{InLemmaNonConn2} by induction on $r:=|W_{X}|$.
If $W_{X}=\{x\}$ then $|N_{C_{2s}}(x)|=2$, $|\{x\}|=1$, $|\comp((C_{x})[\{x\}])|=1$ and the statement holds.
Consider now $W$ such that $|W_{X}|=r>1$ and assume that the statement holds for subsets $X'$ such that $|W_{X'}|=r-1$.
By \ref{InLemmaNonConn1}, we know that there exists $x_0\in W_X$ such that $N_{C_{2s}}(x_0)\not \subset N_{C_{2s}}(W_{X}\backslash \{x_0\})$,
and one has two possibilities:
\begin{itemize}
 \item
If $x_0$ is connected in $C_{X}$ to some $x\in W_{X}\backslash\{x_0\}$, i.e., if
$N_{C_{2s}}(x_{0})\cap  N_{C_{2s}}(W_{X}\backslash \{x_{0}\})\not=\emptyset$, then
$|N_{C_{2s}}(W_X)|=|N_{C_{2s}}(W_X\backslash \{x_{0}\})|+1$. In this case,
$|\comp((C_{X})[W_{X}])|=|\comp(C_{X}[W_{X}\backslash\{x_{0}\}])|$;
\item
Otherwise, $|N_{C_{2s}}(W_X)|=|N_{C_{2s}}(W_X\backslash \{x_{0}\})|+2$ and
$|\comp(C_{X}[W_{X}])|=|\comp(C_{X}[W_{X}\backslash\{x_{0}\}])|+1$.
\end{itemize}
In both cases, applying our inductive hypothesis, one gets that
$|N_{C_{2s}}(W_X)|=|W_X\backslash \{x_{0}\})|+|\comp(C_{X}[W_{X}])|+1=|W_X|+|\comp(C_{X}[W_{X}])|$.
\end{proof}

The nonzero entries on the first row of the Betti diagram are given by the following result.

\begin{proposition}\label{FirstRowBettiCBC}
\begin{enumerate}
\item\label{FirstRowBettiCBCzeros}
For all $j\geq s$, $\beta_{j-2,j}(I(C_{2s}^{bc}))=0$.
\item\label{FirstRowBettiCBCnonzero}
For $j=2,\ldots,s-1$,
$$
\beta_{j-2,j}(I(C_{2s}^{bc}))=\sum_{k=1}^{j-1}\sum_{c=1}^{k}\frac{s}{c}\binom{k-1}{c-1}\binom{s-k-1}{c-1}\binom{s-k-c}{j-k}\ .
$$
\end{enumerate}
\end{proposition}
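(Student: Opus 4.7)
The plan is to apply Hochster's Formula (\ref{hochster}) together with the homological dichotomy in Proposition~\ref{componentes}: for a subset $W\subset V$ of cardinality $j$, the contribution of $W$ to $\beta_{j-2,j}(I(C_{2s}^{bc}))$ equals $\dim_\K\tilde H_0(\Delta(G[W]))$, which by Proposition~\ref{componentes}.\ref{nonemptyKzero} and Remark~\ref{rmk} is $1$ exactly when $W_X$ and $W_Y$ are both non-empty and $N_{C_{2s}}(W_X)\cap W_Y=\emptyset$, and $0$ otherwise (the case $W=V$ contributes only to $\tilde H_2$, by Proposition~\ref{homDelta}). So $\beta_{j-2,j}(I(C_{2s}^{bc}))$ counts the proper subsets $W\subsetneq V$ of cardinality $j$ with $W_X\neq\emptyset$, $W_Y\neq\emptyset$ and $W_Y\subset Y\setminus N_{C_{2s}}(W_X)$.

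For part~\ref{FirstRowBettiCBCzeros} I would invoke Lemma~\ref{LemmaNonConnected}.\ref{InLemmaNonConn2}: the set $Y\setminus N_{C_{2s}}(W_X)$ has cardinality $s-|W_X|-|\comp(C_X[W_X])|$, so any admissible $W$ satisfies
$$|W|=|W_X|+|W_Y|\,\leq\,|W_X|+\bigl(s-|W_X|-|\comp(C_X[W_X])|\bigr)\,\leq\,s-1,$$
using that $W_X\neq\emptyset$ forces $|\comp(C_X[W_X])|\geq 1$. Hence there is no admissible $W$ when $j\geq s$.

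For part~\ref{FirstRowBettiCBCnonzero} I would stratify the admissible $W$'s by the two parameters $k:=|W_X|$ and $c:=|\comp(C_X[W_X])|$. Since both $W_X$ and $W_Y$ are non-empty, $k$ runs from $1$ to $j-1$; since each component of $C_X[W_X]$ has at least one vertex, $c$ runs from $1$ to $k$. Once $(k,c)$ is fixed, the number of valid $W_Y$ is $\binom{s-k-c}{j-k}$ by Lemma~\ref{LemmaNonConnected}.\ref{InLemmaNonConn2}, and the number of $W_X\subset X=V(C_X)$ of size $k$ whose induced subgraph in $C_X$ has exactly $c$ connected components is $\frac{s}{c}\binom{k-1}{c-1}\binom{s-k-1}{c-1}$. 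Multiplying and summing over $(k,c)$ gives the stated formula, with boundary cases harmlessly absorbed by the vanishing of the binomials.

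The only non-routine ingredient is the cyclic count $\frac{s}{c}\binom{k-1}{c-1}\binom{s-k-1}{c-1}$, and this is the main obstacle to record cleanly. I would derive it by encoding each configuration of $c$ disjoint, pairwise non-adjacent arcs of total length $k$ on the cycle $C_s$ as an alternating sequence of positive arc-lengths $(a_1,\ldots,a_c)$ and positive gap-lengths $(b_1,\ldots,b_c)$ with $\sum a_i=k$ and $\sum b_i=s-k$: there are $\binom{k-1}{c-1}\binom{s-k-1}{c-1}$ such pairs of compositions, which we multiply by $s$ (for the starting vertex of the first arc on the cycle) and divide by $c$ (for the cyclic ambiguity in labelling one of the arcs as ``first''). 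Once this combinatorial identity is in place, the rest of the proof is a direct accounting.
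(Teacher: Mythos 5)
Your proposal is correct and follows essentially the same route as the paper: Hochster's Formula plus Proposition~\ref{componentes} and Remark~\ref{rmk} reduce the computation to counting proper $W$ with $W_X,W_Y\neq\emptyset$ and $N_{C_{2s}}(W_X)\cap W_Y=\emptyset$, part~\ref{FirstRowBettiCBCzeros} follows from Lemma~\ref{LemmaNonConnected}.\ref{InLemmaNonConn2} exactly as you argue, and part~\ref{FirstRowBettiCBCnonzero} is the same stratification by $k=|W_X|$ and $c=|\comp(C_X[W_X])|$. The only difference is that the paper simply cites \cite[Lemma 3.3]{fernandez-ramos_first_2009} for the count $\frac{s}{c}\binom{k-1}{c-1}\binom{s-k-1}{c-1}$, whereas you re-derive it via compositions of arc and gap lengths; your derivation is valid.
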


\begin{proof}
Consider a proper subset $W$ of $V$ with $|W|=j\geq 2$. As already observed,
$\tilde H_{0}(\Delta(G[W]))$ will contribute (by 1) to $\beta_{j-2,j}(I(C_{2s}^{bc}))$ in Hochster's Formula if and only if
$G^c[W]$ is not connected.

\smallskip
By Remark~\ref{rmk}, if $G^{c}[W]$ is not connected then
$|W_X|>0$, $|W_Y|>0$ and $|N_{C_{2s}}(W_X)|+|W_Y|\leq|Y|=s$. Thus,
$|W_Y|\leq s-|N_{C_{2s}}(W_X)|<s-|W_X|$
by Lemma~\ref{LemmaNonConnected}.\ref{InLemmaNonConn2}
since $W_X\not=\emptyset$ and hence
$|\comp(C_{X}[W_{X}])|\neq 0$. It implies
that $0<|W_X|<|W|=|W_X|+|W_Y|<s$. Thus if $|W|\geq s$, $G^{c}[W]$ is connected and \ref{FirstRowBettiCBCzeros} follows.

\smallskip
Now for $j$ with $2\leq j\leq s-1$,
we have to count how many subsets $W$ of $V$ with $|W|=j$ satisfy that $G^c[W]$ is not connected.
For each choice of $W_{X}$ with $k$ elements ($1\leq k\leq j-1$ in order to have $W_{X}\not= \emptyset$
and $W_{Y}\not= \emptyset$), we must choose $j-k$ elements from $Y\backslash N_{C_{2s}}(W_X)$ for $W_{Y}$,
so there are $\binom{s-|N_{C_{2s}}(W_X)|}{j-k}=\binom{s-k-|{\small\comp}(C_X[W_{X}])|}{j-k}$ possible choices
by Lemma~\ref{LemmaNonConnected}.\ref{InLemmaNonConn2}.
If we fix the number of connected components
of $C_X[W_{X}]$ and denote it by $c$, according to \cite[Lemma 3.3]{fernandez-ramos_first_2009},
there are $\frac{s}{c}\binom{k-1}{c-1}\binom{s-k-1}{c-1}$ possible subsets $W_{X}$ with $|W_{X}|=k$ and $|\comp(C_X[W_{X}])|=c$,
and the result follows.
\end{proof}

\begin{corollary} \label{beta02}
The first and the last nonzero entries on the first row of the Betti diagram of $I(C_{2s}^{bc})$ coincide, i.e.,
$\beta_{s-3,s-1}(I(C_{2s}^{bc}))=\beta_{0,2}(I(C_{2s}^{bc}))$.
\end{corollary}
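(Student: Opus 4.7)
The plan is to derive both $\beta_{0,2}$ and $\beta_{s-3,s-1}$ directly from the closed formula in Proposition~\ref{FirstRowBettiCBC}.\ref{FirstRowBettiCBCnonzero} and to observe that at each of the two extreme values $j=2$ and $j=s-1$ the triple sum collapses drastically, leaving the common value $s(s-2)$.

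First I would look at $j=2$. The outer sum has only the index $k=1$ (since $1\leq k\leq j-1=1$), and then only $c=1$. The three binomial factors reduce to $\binom{0}{0}=1$, $\binom{s-2}{0}=1$, and $\binom{s-2}{1}=s-2$, and the prefactor is $s$, giving
$$
\beta_{0,2}(I(C_{2s}^{bc}))=s(s-2)\,.
$$
(This also matches the fact that $\beta_{0,2}$ is the number of minimal generators of $I(C_{2s}^{bc})$, i.e., the number of edges of the bipartite graph $C_{2s}^{bc}$, which is $|K_{s,s}|-|C_{2s}|=s^{2}-2s$.)

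Next I would handle $j=s-1$. The key—and essentially only—step is to observe that the binomial factor $\binom{s-k-c}{j-k}=\binom{s-k-c}{s-1-k}$ vanishes as soon as $c\geq 2$, because then $s-k-c<s-1-k$. Hence only the terms with $c=1$ contribute. For $c=1$ the factors $\binom{k-1}{c-1}$ and $\binom{s-k-1}{c-1}$ are both equal to $1$, the factor $\binom{s-k-1}{s-1-k}$ equals $1$ as well, and $s/c=s$. Summing over $k=1,\ldots,s-2$ yields
$$
\beta_{s-3,s-1}(I(C_{2s}^{bc}))=\sum_{k=1}^{s-2} s=s(s-2)\,,
$$
which coincides with $\beta_{0,2}(I(C_{2s}^{bc}))$.

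I do not expect any real obstacle; the whole content is the collapse $c=1$ when $j=s-1$. If desired, one can give a combinatorial explanation instead: by Lemma~\ref{LemmaNonConnected}.\ref{InLemmaNonConn2}, the disconnectedness condition forces $|N_{C_{2s}}(W_X)|+|W_Y|\leq s$, i.e., $|\comp(C_X[W_X])|\leq s-|W|$. For $|W|=s-1$ this means $|\comp(C_X[W_X])|=1$, so $W_X$ must be a set of consecutive vertices on the cycle $C_X$; for each length $k\in\{1,\ldots,s-2\}$ there are exactly $s$ such arcs, and then $W_Y$ is forced to be the whole complement $Y\setminus N_{C_{2s}}(W_X)$, which has the required cardinality $s-1-k$. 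This gives again $s(s-2)$ disconnected induced subgraphs, hence the same value for $\beta_{s-3,s-1}$.
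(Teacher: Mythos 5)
Your proposal is correct and follows essentially the same route as the paper: evaluating the closed formula at $j=s-1$, noting that $\binom{s-k-c}{j-k}$ forces $c=1$, and matching the resulting $s(s-2)$ with $\beta_{0,2}=|E(C_{2s}^{bc})|$. The extra verification of the formula at $j=2$ and the combinatorial aside are harmless additions but not a different argument.
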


\begin{proof}
For $j=s-1$ one has that
$\binom{s-k-c}{j-k}\not=0$ if and only if $c=1$. In this case $\binom{k-1}{c-1}=\binom{s-k-1}{c-1}=\binom{s-k-c}{j-k}=1$,
and hence $\beta_{s-3,s-1}(I(C_{2s}^{bc}))=\sum_{k=1}^{s-2}s=s(s-2)=|E(\bc C_{2s})|=\beta_{0,2}(I(C_{2s}^{bc}))$.
\end{proof}

The description of the Betti diagram of $I(C_{2s}^{bc})$ will be complete once we give the graded Betti numbers located on the second row.
This is our next result.

\begin{proposition}\label{SecRowBettiCBC}
\begin{enumerate}
\item\label{SecondRowBettiCBCzeros}
For all $j\geq 2s-1$, $\beta_{j-3,j}(I(C_{2s}^{bc}))=0$.
\item\label{SecondRowBettiCBCnonzero}
For $j=4,\ldots,2s-2$,
{\small
$$
\beta_{j-3,j}(I(C_{2s}^{bc}))=\sum_{m=2}^{\lfloor j/2\rfloor}(m-1)\sum_{a=0}^{j-2m}\frac{2s}{m}
\binom{j-m-a-1}{m-1}\binom{2s-j-1}{m-1}\binom{2s-j-m}{a}\ .
$$
}
\end{enumerate}
\end{proposition}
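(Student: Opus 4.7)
The plan is to invoke Hochster's Formula~(\ref{hochster}) in combination with Propositions \ref{homDelta} and \ref{componentes}, thereby reducing everything to a combinatorial count on the vertex set of the cycle. According to those results, for any $W\subset V$ with $|W|=j$ the reduced homology $\tilde H_{1}(\Delta(G[W]))$ vanishes unless $W\subsetneq V$, $W_{X}\neq\emptyset$, $W_{Y}\neq\emptyset$ and $k_{W}\geq 2$, in which case it has dimension exactly $k_{W}-1$. Therefore
\[\beta_{j-3,j}(I(\bc{C_{2s}}))=\sum_{\substack{|W|=j\\ k_{W}\geq 2}}(k_{W}-1).\]

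For part~(\ref{SecondRowBettiCBCzeros}) I would separate the two cases $j=2s$ and $j=2s-1$. If $j=2s$ then $W=V$ and $\tilde H_{1}(\Delta(G))=0$ by Proposition~\ref{homDelta}. If $j=2s-1$, then $V\setminus W$ consists of a single vertex, so $C_{2s}[W]$ is a path, which has a unique non-trivial connected component; thus $k_{W}=1$ and the contribution vanishes.

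For part~(\ref{SecondRowBettiCBCnonzero}), fix $j$ with $4\leq j\leq 2s-2$. Since $W\subsetneq V$, the arcs of $C_{2s}[W]$ and those of $C_{2s}[V\setminus W]$ alternate around the cycle, so both collections contain the same number of arcs. I would classify the admissible $W$ by the pair $(m,a)$ where $m:=k_{W}$ is the number of $W$-arcs of length $\geq 2$ and $a$ is the number of isolated vertices of $C_{2s}[W]$. The requirement that these $m$ arcs have total length $j-a\geq 2m$ forces $2\leq m\leq\lfloor j/2\rfloor$ and $0\leq a\leq j-2m$, matching the ranges of the double sum in the statement. The number of $W$ with prescribed $(m,a)$ is then obtained by a cyclic composition argument modeled on \cite[Lemma~3.3]{fernandez-ramos_first_2009}: cutting the cycle at the first vertex of a distinguished gap and compensating by the factor $\frac{2s}{m+a}$ for the overcount, one gets
\[\frac{2s}{m+a}\binom{2s-j-1}{m+a-1}\binom{m+a}{a}\binom{j-a-m-1}{m-1}\]
configurations, the three binomials counting respectively the compositions of $2s-j$ into $m+a$ positive gap-lengths, the choice of which of the $m+a$ $W$-arcs are the isolated ones, and the compositions of $j-a$ into $m$ parts of size $\geq 2$. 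Weighting each such $W$ by $(k_{W}-1)=m-1$ and applying the elementary binomial identities $\tfrac{1}{m+a}\binom{m+a}{a}=\tfrac{1}{m}\binom{m+a-1}{a}$ together with $\binom{2s-j-1}{m+a-1}\binom{m+a-1}{a}=\binom{2s-j-1}{m-1}\binom{2s-j-m}{a}$ to rewrite the product delivers exactly the formula in~(\ref{SecondRowBettiCBCnonzero}).

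The main obstacle is the combinatorial count of subsets of $V(C_{2s})$ with a prescribed pattern of arc sizes: the cyclic symmetry must be handled carefully in order to avoid over- or under-counting, and then the raw product of binomials has to be massaged into the more symmetric form displayed in the proposition.
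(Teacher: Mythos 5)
Your proposal is correct and follows essentially the same route as the paper: Hochster's Formula plus Proposition~\ref{componentes} reduce the Betti number to the weighted count $\sum(k_W-1)$ over subsets $W$ classified by the number $m$ of non-trivial arcs and the number $a$ of isolated vertices, and the enumeration is then a cyclic composition argument in the spirit of \cite[Lemma~3.3]{fernandez-ramos_first_2009}. The only (harmless) difference is in the bookkeeping of that final count: you distinguish an arbitrary block, divide by $m+a$, and place the $a$ singletons with a factor $\binom{m+a}{a}$, then convert via binomial identities, whereas the paper insists the distinguished block be a long one (factor $\tfrac{2s}{m}$) and produces the singletons by an insertion argument, landing on the displayed product $\tfrac{2s}{m}\binom{j-m-a-1}{m-1}\binom{2s-j-1}{m-1}\binom{2s-j-m}{a}$ directly.
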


\begin{proof}
By Proposition \ref{componentes},
$\tilde H_{1}(\Delta(G[W]))$ will contribute to $\beta_{j-3,j}(I(C_{2s}^{bc}))$ in Hochster's Formula (\ref{hochster}) if and only if
$W$ is a proper subset of $V$ with $|W|=j\geq 4$ such that
$C_{2s}[W]$ has at least 2 connected components that are not isolated vertices. More precisely,
denoting by $w(j,m)$ the number of proper subsets $W$ of $V$ with $|W|=j$
and such that $C_{2s}[W]$ has $m$ connected components that are not isolated vertices, then
\begin{equation}\label{valorbetti}
 \beta_{j-3,j}(I(G))=\sum_{m=2}^{\lfloor\frac{j}{2}\rfloor}(m-1)w(j,m) \ .
\end{equation}
In particular, since for any subset $W$ of $V$ with $2s-1$ elements, one has that $C_{2s}[W]$ is connected,
\ref{SecondRowBettiCBCzeros} follows.

\smallskip
Now for $j\leq 2s-2$, denote by $W(j,m,a)$ the set of proper subsets $W$ of $V$ with $|W|=j$
and such that $C_{2s}[W]$ has $a$ isolated vertices and $m$ connected components that are not isolated vertices. Then, $w(j,m)=\sum_{a=0}^{j-2m}w(j,m,a)$ where
$w(j,m,a)=|W(j,m,a)|$, and we are reduced to compute $w(j,m,a)$
for all possible $j,m,a$.

\smallskip
As in the proof of \cite[Lemma 3.3]{fernandez-ramos_first_2009}, observe that a subset $W$ of $V$ can be represented
as a vector of length $2s$ whose $\ell$-th entry is 1 if the $\ell$-th element
in $V$ belongs to $W$, 0 otherwise.
Using this correspondence, the number of nonzero entries in this vector is the number of vertices in $C_{2s}[W]$
and the number of blocks of nonzero entries is related to the number of connected components of $C_{2s}[W]$.
In order to avoid distinguishing cases as when the vector starts/ends with 1/0, we
will allow to modify the starting vertex and focus only on vectors whose first
entry is 1 and last entry is 0. Denote by $B(2s,j,k)$ the set of vectors of length $2s$, with entries in $\{0,1\}$,
whose first entry is 1 and last entry is 0, and whose $j$ nonzero entries are located in $k$ different blocks.
Let $H(j,m,a)$ be the subset of $B(2s,j,m+a)$ formed by vectors with
$m$ blocks of 1's of length strictly bigger than 1 and $a$ blocks of 1's of length 1 and whose first block of nonzero entries has length strictly bigger than 1. To each element $w$ in $H(j,m,a)$ corresponds $2s$ elements in $W(j,m,a)$ (one for each choice of a vertex of $C_{2s}$ as the vertex corresponding to the first entry of $w$), and to an element $W$ in $W(j,m,a)$ corresponds $m$ distinct elements in $H(j,m,a)$ (one for each connected component of $C_{2s}[W]$ that we choose as the one that gives the first block of nonzero entries in the vector). Thus, $w(j,m,a)=\frac{2s}{m}|H(j,m,a)|$.

\smallskip
Finally, in order to determine $|H(j,m,a)|$, note that each element in $H(j,m,a)$ comes from a vector $h$ in $B(2s-m-a,j-m-a,m)$
by adding 1 in each block of 1's of $h$ (there are $m$), and by inserting $a$ times a 1 between two zero entries of $h$.
As already observed in the proof of \cite[Lemma 3.3]{fernandez-ramos_first_2009},
$|B(2s-m-a,j-m-a,m)|=\binom{j-m-a-1}{m-1}\binom{2s-j-1}{m-1}$. Moreover, for any element in $B(2s-m-a,j-m-a,m)$,
each block of zero entries of length $\ell$ will give $\ell-1$ places where one can add a 1 between two zero entries, and since and element in $B(2s-m-a,j-m-a,m)$ has $2s-j$ zero entries located in $m$ different blocks, each element in $B(2s-m-a,j-m-a,m)$ will provide
$\binom{2s-j-m}{a}$ elements in $H(j,m,a)$.
Putting all together, one gets that $w(j,m,a)=\frac{2s}{m}\binom{2s-j-m}{a}\binom{j-m-a-1}{m-1}\binom{2s-j-1}{m-1}$ and we are done.
\end{proof}

\begin{corollary}
The first and the last nonzero entries on the second row of the Betti diagram of $I(C_{2s}^{bc})$ coincide, i.e.,
$\beta_{2s-5,2s-2}(I(C_{2s}^{bc}))=\beta_{1,4}(I(C_{2s}^{bc}))$.
\end{corollary}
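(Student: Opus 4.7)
The plan is to evaluate the closed formula in Proposition~\ref{SecRowBettiCBC}.\ref{SecondRowBettiCBCnonzero} at the two extreme values $j=4$ and $j=2s-2$ and verify that in each case the double sum collapses to a single surviving term, and that both surviving terms give the same number. The strategy is entirely parallel to the proof of the adjacent Corollary~\ref{beta02}, where a similar collapse to a single index occurred at the two endpoints of the first row.

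First I would handle $j=4$: the outer bound $2\leq m\leq\lfloor 4/2\rfloor=2$ forces $m=2$, and then $0\leq a\leq j-2m=0$ forces $a=0$, leaving only the triple $(m,a)=(2,0)$. A direct substitution yields
$$\beta_{1,4}(I(C_{2s}^{bc}))=\frac{2s}{2}\binom{1}{1}\binom{2s-5}{1}\binom{2s-6}{0}=s(2s-5).$$

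For $j=2s-2$, the argument relies on spotting where the binomial factors in the formula vanish at the boundary. The factor $\binom{2s-j-1}{m-1}=\binom{1}{m-1}$ kills every term with $m\geq 3$, forcing $m=2$; once $m=2$, the factor $\binom{2s-j-m}{a}=\binom{0}{a}$ kills every term with $a\geq 1$. So again only the triple $(m,a)=(2,0)$ survives, and substitution gives
$$\beta_{2s-5,2s-2}(I(C_{2s}^{bc}))=\frac{2s}{2}\binom{2s-5}{1}\binom{1}{1}\binom{0}{0}=s(2s-5),$$
matching the value at $j=4$. I do not foresee any real obstacle: the statement is a pure boundary check on the formula of Proposition~\ref{SecRowBettiCBC}, and the mechanism behind the equality is the symmetric exchange between the factors $\binom{j-m-a-1}{m-1}$ and $\binom{2s-j-1}{m-1}$ at the surviving term.
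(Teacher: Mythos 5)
Your proposal is correct and follows exactly the paper's own argument: at each endpoint the binomial factors force $(m,a)=(2,0)$, and the surviving term evaluates to $s(2s-5)$ in both cases. Nothing to add.
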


\begin{proof}
For $j=2s-2$, $\binom{2s-j-1}{m-1}\not=0$ if and only if $m=2$, and then $\binom{2s-j-m}{a}\not=0$ if and only if $a=0$,
and hence $\beta_{2s-5,2s-2}(I(C_{2s}^{bc}))=\frac{2s}{2}\binom{2s-5}{1}=s(2s-5)$.
On the other hand,
$\beta_{1,4}(I(C_{2s}^{bc}))=\frac{2s}{2}\binom{1}{1}\binom{2s-5}{1}\binom{2s-6}{0}$ and we are done.
\end{proof}

\begin{remark}
Recall from \cite{katzman_characteristic-independence_2006} that the induced matching number of a graph $G$ and the
Castelnuovo-Mumford regularity of the edge ideal $I(G)$ satisfy that $\mu(G)+1\leq \reg{(I(G))}$.
In the case of the bipartite complement of an even cycle, one can easily determine the induced matching number.
Since $C_6^{bc}= 3K_2$, $\mu(C^{bc}_6)=3$.
Now if $s>3$, $3K_2\not< C_{2s}^{bc}$ because $C_6\not<C_{2s}$, and hence $\mu(C_{2s}^{bc})<3$.
As $C_{2s}^{bc}[\{x_1,x_3,y_1,y_2\}]$ is formed by the
two non-connected edges $\{x_1,y_2\}$ and $\{y_1,x_3\}$, $\mu(C^{bc}_{2s})=2$ if $s>3$.
Hence, the matching number of $C^{bc}_{2s}$ and the Castelnuovo-Mumford regularity of
$I(C_{2s}^{bc})\subset R=\K[x_1,\ldots,x_s,y_1,\ldots,y_s]$ are related as follows:
$$
\reg{(I(C_{2s}^{bc}))}=
\left\{\begin{array}{ll}  \mu(C^{bc}_{2s})+1 & \hbox{ if }  s=3\,, \\ \mu(C^{bc}_{2s})+2 &\hbox{ if }  s\geq 4\,. \end{array} \right.
$$
This is not the only difference between
the cases $s=3$ and $s\geq 4$. Indeed, $R/I(C_{6}^{bc})$ is a complete intersection while, for $s\geq 4$, $R/I(C_{2s}^{bc})$ is not even Cohen-Macaulay (if it was then it would be Gorenstein which is impossible since its Betti diagram is not symmetric).
\end{remark}

\section{Regularity 3 in bipartite edge ideals}\label{reg3}

In this section we focus on edge ideals associated to bipartite graphs, which we call {\it bipartite edge ideals}.
We only consider connected graphs because the Betti numbers of the edge ideal associated to a disconnected graph can be computed from the Betti numbers of the edge ideals associated to its connected components; see \cite[Lemma 2.1]{jacques_betti_2005}.

\smallskip
Bipartite edge ideals
having regularity 2 can be characterized using Theorem~\ref{froberg}.
They are shown to be the edge ideals associated to Ferrer's graphs
in \cite[Theorem 4.2]{corso_monomial_2009}.

\smallskip
Our aim here is to prove our main results, Theorems~\ref{characterization} and \ref{main1}. The first one, analogous to Fr\"oberg's classical Theorem~\ref{froberg},
provides a combinatorial characterization of bipartite edge ideals having regularity 3. The second one, analogous to Theorem~\ref{nuestro}, gives some extra information when the bipartite edge ideal $I(G)$ has regularity $>3$: we determine the first step $i$ in the minimal graded free resolution of $I(G)$ where there are syzygies contributing to a graded Betti number located outside the first two rows of the Betti diagram. We also show that these syzygies are then concentrated in degree $i+4$ and compute the corresponding graded Betti number $\beta_{i,i+4}$.

\begin{theorem}\label{characterization}
Let $G$ be a connected bipartite graph.
The edge ideal $I(G)$ has regularity 3 if and only if
$G^{c}$ has at least one induced cycle {\rm(}of length $\geq 4${\rm)}
and $G^{bc}$ does not have any induced cycle of length $\geq 6$.
\end{theorem}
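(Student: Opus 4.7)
The plan is to treat the two implications separately.

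For $(\Rightarrow)$, I would assume $\reg(I(G))=3$ and apply Fr\"oberg's theorem (Theorem~\ref{froberg}) to conclude that $G^c$ has at least one induced cycle (since $\reg(I(G))>2$). For the second condition, I argue by contradiction: if $G^{bc}[W]\simeq C_{2s}$ for some $W\subseteq V(G)$ with $s\geq 3$, then with the bipartition inherited from $G$, one checks that $G[W]\simeq C_{2s}^{bc}$, so $\Delta(G)[W]=\Delta(C_{2s}^{bc})$. By Proposition~\ref{homDelta}, $\dim_{\K}\tilde H_2(\Delta(C_{2s}^{bc}))=1$, and Hochster's Formula~(\ref{hochster}) applied to this $W$ gives $\beta_{2s-4,2s}(I(G))\geq 1$, producing a regularity of at least $4$, a contradiction.

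For $(\Leftarrow)$, I would use Fr\"oberg's theorem again to obtain $\reg(I(G))\geq 3$ from the first condition, and then show $\reg(I(G))\leq 3$ by proving $\beta_{i,j}(I(G))=0$ for all $j-i\geq 4$. Via Hochster's Formula, this reduces to $\tilde H_k(\Delta(G)[W])=0$ for every $W\subseteq V(G)$ and every $k\geq 2$. Since $(G[W])^{bc}=G^{bc}[W]$ is an induced subgraph of $G^{bc}$ and hence also contains no induced cycle of length $\geq 6$, the statement further reduces to the following claim: \emph{if $\Gamma$ is a bipartite graph whose bipartite complement contains no induced cycle of length $\geq 6$, then $\tilde H_k(\Delta(\Gamma))=0$ for all $k\geq 2$.} I would prove this claim by induction on $|V(\Gamma)|$. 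For the inductive step, I pick a vertex $v\in V(\Gamma)$ with at least one $\Gamma$-neighbor (otherwise $\Delta(\Gamma)$ is a cone, acyclic by Proposition~\ref{homConeSusp}.\ref{homCone}). Since $\del_{\Delta(\Gamma)}(v)$ and $\link_{\Delta(\Gamma)}(v)$ are independence complexes of proper induced subgraphs of $\Gamma$ whose bipartite complements remain free of induced cycles of length $\geq 6$, the inductive hypothesis gives $\tilde H_k(\del_{\Delta(\Gamma)}(v))=\tilde H_k(\link_{\Delta(\Gamma)}(v))=0$ for $k\geq 2$. Plugging this into the Mayer-Vietoris sequence~(\ref{MV}) yields $\tilde H_k(\Delta(\Gamma))=0$ for $k\geq 3$ and forces $\tilde H_2(\Delta(\Gamma))$ to inject into $\tilde H_1(\link_{\Delta(\Gamma)}(v))$ for every such $v$.

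The hardest step will be to exhibit a vertex $v$ for which $\tilde H_1(\link_{\Delta(\Gamma)}(v))=0$. My plan is to apply the reduction rules of Lemma~\ref{lanota} (parts~\ref{launo}, \ref{latres}, \ref{lacuatro}, \ref{lacinco}), which remove vertices without changing the homology, until $\Gamma$ satisfies strong combinatorial constraints (no isolated vertex in $\Gamma$ or in $\Gamma^{bc}$, no $\Gamma^{bc}$-degree-1 vertex whose unique neighbor has degree $\geq 2$, no two same-class vertices with nested $\Gamma^{bc}$-neighborhoods, etc.). Exploiting the chordal bipartite structure of $\Gamma^{bc}$---notably the existence of a bisimplicial edge in every chordal bipartite graph---one can then pinpoint a vertex $v$ whose link either decomposes as a cone (hence acyclic by Proposition~\ref{homConeSusp}) or admits a further Mayer-Vietoris decomposition forcing $\tilde H_1(\link_{\Delta(\Gamma)}(v))=0$. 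Were no such $v$ to exist, the accumulated constraints would force $\Gamma^{bc}$ to contain an induced cycle of length $\geq 6$, contradicting the hypothesis. This final combinatorial analysis, balancing the reduction rules against the chordal bipartite structure of $\Gamma^{bc}$, is the most delicate part of the argument.
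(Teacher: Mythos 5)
Your forward implication is correct and coincides with the paper's: from $G^{bc}[W]\simeq C_{2s}$ one gets $G[W]\simeq C_{2s}^{bc}$, and Proposition~\ref{homDelta} plus Hochster's Formula yield $\beta_{2s-4,2s}(I(G))\geq 1$, contradicting $\reg(I(G))=3$; this is exactly how the paper uses Theorem~\ref{bettibcciclo}. The backward implication, however, has a genuine gap. Your reduction is sound up to a point: the Mayer--Vietoris induction correctly kills $\tilde H_k(\Delta(\Gamma))$ for $k\geq 3$ and shows that $\tilde H_2(\Delta(\Gamma))$ injects into $\tilde H_1(\link_{\Delta(\Gamma)}(v))$ for every non-isolated $v$. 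But the entire content of the theorem is now concentrated in the step you defer: exhibiting a vertex $v$ with $\tilde H_1(\link_{\Delta(\Gamma)}(v))=0$, or proving that the failure of this for every $v$ forces an induced cycle of length $\geq 6$ in $\Gamma^{bc}$. You do not carry this out; you list the reduction rules of Lemma~\ref{lanota} and invoke bisimplicial edges of chordal bipartite graphs (a fact not in the paper, whose bearing on the vanishing of $\tilde H_1$ of a link is never established). Note that $\tilde H_1$ of an independence complex is not governed by induced cycles in the bipartite complement in any routine way (already $\tilde H_1(\Delta(2K_2))\neq 0$ while $(2K_2)^{bc}=2K_2$ has no cycle), so ``every link has nonvanishing $\tilde H_1$'' does not obviously produce a long induced cycle in $\Gamma^{bc}$. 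As written, this is a plan rather than a proof, and it is missing precisely where the theorem is hardest.

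For comparison, the paper closes this step by a different device, arguing contrapositively from the minimal $i_4$ with $\beta_{i_4,i_4+4}(I(G))\neq 0$: it takes a witness $W$ with $\tilde H_2(\Delta(G)[W])\neq 0$, reduces to $W$ relevant via Lemma~\ref{lanota}.\ref{lacinco}, and uses the complex of (\ref{DK}) together with (\ref{teoremaDK}) and Lemma~\ref{facetsDK} to transfer the $\tilde H_2$ obstruction to an $\tilde H_1$ obstruction on the auxiliary complex $\Gamma_{G[W]}$ supported on $W_X$ alone. The decisive input is Lemma~\ref{gradogens} combined with $\mu(G)=2$ (which follows from $\beta_{2,6}(I(G))=0$ and \cite[Lemma 2.2]{katzman_characteristic-independence_2006}): this forces $I_{\Gamma_{G[W]}}$ to be generated in degree two, so that $\Gamma_{G[W]}=\Delta(G^*)$ is a flag complex, and the already established Theorem~\ref{nuestro} identifies $(G^*)^c$ as an induced cycle; a degree count then upgrades this to $G^{bc}[W]\simeq C_{|W|}$. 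If you wish to keep your route, you must supply the missing analysis of the links in full; otherwise the argument is incomplete.
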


\begin{theorem} \label{main1}
Let $G$ be a connected bipartite graph and set $r:=|V(G)|$. Assume that
$\reg(I(G))>3$ and let $t=2s\geq 6$ be the minimal length of an induced cycle in $G^{bc}$. Then:
\begin{itemize}
\item $\beta_{i,j}(I(G))=0$ for all $i<t-4$ and $j>i+3$;
\item $\beta_{t-4,t}(I(G))=|\{$induced $t$-cycles in $G^{bc}\}|$;
\item $\beta_{t-4,j}(I(G))=0$ for all $j>t$;
\item
for any $\mathbf m\in {\mathbb N}^r$ such that $\vert{\mathbf m}\vert=t$, one has that
$\beta_{t-4,\mathbf m}(I(G))=1$ if $\mathbf m\in \{0,1\}^{r}$ and $G^{bc}[W]\simeq C_{t}$ where $W:=\{v_i\in V(G)\,:\ m_i=1\}$. Otherwise, $\beta_{t-4,\mathbf m}(I(G))=0$.
\end{itemize}
\end{theorem}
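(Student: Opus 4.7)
The plan is to derive Theorem~\ref{main1} from a single unified homological claim about the induced subcomplexes of $\Delta(G)$, proved by induction on $|W|$ using the reduction rules of Lemma~\ref{lanota} together with the Mayer--Vietoris sequence (\ref{MV}). Concretely, the claim is: for every $W\subseteq V(G)$,
\[
\tilde H_k(\Delta(G[W]))=0\quad\text{for all}\quad k\geq\max(2,|W|-t+2),
\]
the only exception being $\tilde H_2(\Delta(G[W]))\cong\K$ when $|W|=t$ and $G^{bc}[W]\cong C_t$. Once this is established, the four bullets become immediate from Hochster's formula (\ref{hochster})--(\ref{hochsterMult}): bullets (1) and (3) translate the vanishing through $k=j-i-2$, while bullets (2) and (4) follow by identifying the subsets $W$ of size $t$ with $G^{bc}[W]\cong C_t$ as the induced $t$-cycles of $G^{bc}$.

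The key structural observation is that, by the minimality of $t$, for $|W|\leq t$ the induced subgraph $G^{bc}[W]$ is either $C_t$ (only possible when $|W|=t$, since the cycle must use all of $W$) or has no induced cycle at all, i.e., is a bipartite forest. The $C_t$-case is precisely Proposition~\ref{homDelta}. So it remains to prove by induction on $|W|$ that: (a) $\tilde H_k(\Delta(G[W]))=0$ for all $k\geq 2$ whenever $G^{bc}[W]$ is a forest, and (b) $\tilde H_k(\Delta(G[W]))=0$ for all $k\geq |W|-t+2$ whenever $|W|>t$.

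For (a) I split according to the structure of the forest $G^{bc}[W]$. If $G^{bc}[W]$ has an isolated vertex $v$ (equivalently, $v$ is universal on its side in $G[W]$), Lemma~\ref{lanota}.\ref{latres} deletes $v$ and induction finishes; the edge case in which $v$ is alone on its side is checked directly, since $\Delta(G[W])$ is then the disjoint union of a simplex and a point. Otherwise the forest has a leaf $v$ with unique neighbor $u$; if $\deg_{G^{bc}[W]}(u)\geq 2$, Lemma~\ref{lanota}.\ref{lacuatro} deletes $v$. The main obstacle is the remaining situation in which $\{u,v\}$ is a $K_2$-component of $G^{bc}[W]$, because no rule of Lemma~\ref{lanota} applies. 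Here I invoke (\ref{MV}) at $v$ directly: a short computation shows that $\link_{\Delta(G[W])}(v)$ is the independence complex of the bipartite graph on $(X\cap W\setminus\{v\})\cup\{u\}$ in which $u$, being non-adjacent in $G$ only to $v$, is $G$-adjacent to every vertex of $X\cap W\setminus\{v\}$; hence the link is the disjoint union of a simplex and the isolated point $\{u\}$, with reduced homology concentrated in degree zero. The Mayer--Vietoris sequence then collapses for $k\geq 2$ to $\tilde H_k(\Delta(G[W]))\cong\tilde H_k(\del_{\Delta}(v))$, and the inductive hypothesis closes the case.

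For (b) I induct on $|W|$, again applying Mayer--Vietoris at any vertex $v\in W$ with a $G$-neighbor in $W$ (if none exists, $\Delta(G[W])$ is a cone on $v$, hence acyclic). Since $\del_{\Delta}(v)$ has $|W|-1$ vertices and $\link_{\Delta}(v)$ at most $|W|-2$, the inductive hypothesis yields $\tilde H_k(\del_{\Delta}(v))=0$ for $k\geq|W|-t+1$ and $\tilde H_k(\link_{\Delta}(v))=0$ for $k\geq\max(2,|W|-t)$; routine bookkeeping of indices in the MV long exact sequence then gives $\tilde H_k(\Delta(G[W]))=0$ for $k\geq|W|-t+2$. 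The only point requiring care is that $\link_{\Delta}(v)$ may have exactly $t$ vertices and induce $C_t$ in the bipartite complement (possible only when $|W|=t+2$ and $v$ has a unique $G$-neighbor in $W$), producing $\tilde H_2(\link_{\Delta}(v))\cong\K$; one checks that this degree lies strictly below the range of $k$ being tracked, so it does not interfere with the conclusion.
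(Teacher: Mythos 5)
Your overall architecture is sound and genuinely different from the paper's: you replace the paper's route (the $\Gamma_{G[W]}$ construction of Lemma~\ref{facetsDK}, the isomorphism (\ref{teoremaDK}), Katzman's induced-matching lemma and Theorem~\ref{nuestro}) by a single homology-vanishing claim proved by induction on $|W|$ with Lemma~\ref{lanota} and Mayer--Vietoris, and the index bookkeeping in your step (b) does check out (the exceptional $\tilde H_2$ of the link or of the deletion always sits strictly below the degrees being tracked). However, there is a genuine gap in step (a), at the ``key structural observation.'' You claim that for $|W|\leq t$ the graph $G^{bc}[W]$ is either $C_t$ or a forest, ``by minimality of $t$.'' This silently assumes that $G^{bc}$ has no induced $4$-cycles. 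But an induced $4$-cycle in $G^{bc}$ is just a pair of vertices on each side with no $G$-edges between them, which is generally unavoidable (e.g.\ for $G=C_{12}$, which satisfies $\reg(I(G))>3$, the set $W=\{x_1,x_2,y_3,y_4\}$ gives $G^{bc}[W]\simeq C_4$); this is precisely why Theorem~\ref{characterization} and the paper's argument only ever speak of induced cycles of length $\geq 6$ in $G^{bc}$, and why $t$ must be read as the minimal length among those. Under that (intended and necessary) reading, for $|W|\leq t$ with $G^{bc}[W]\not\simeq C_t$ you only know that $G^{bc}[W]$ has no induced cycle of length $\geq 6$, i.e.\ it is chordal bipartite --- not a forest.

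This breaks your case analysis in (a): a chordal bipartite graph can have minimum degree $\geq 2$ (already $K_{2,2}$, or a $6$-cycle with one chord), so there is neither an isolated vertex nor a leaf to delete, and none of your three reductions applies; since (b) feeds on (a) through the deletion and the link, the whole induction stalls. The claim you need is still true, but proving it requires an extra elimination argument for chordal bipartite graphs --- e.g.\ producing two same-side vertices $u,u'$ with $N_{G^{bc}[W]}(u)\subset N_{G^{bc}[W]}(u')$ so that Lemma~\ref{lanota}.\ref{lacinco} applies, which amounts to the Golumbic--Goss bisimplicial-edge theorem or an equivalent. That missing ingredient is exactly the work done in the paper by passing to $\Gamma_{G[W]}$ and invoking Theorem~\ref{nuestro} (a Fr\"oberg-type statement), so it cannot be waved away. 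As written, your proof establishes the theorem only under the literal, much more restrictive hypothesis that $G^{bc}$ has no induced cycle of any length $<t$, including length~$4$.
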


Before we prove these results, let's recall
a construction and some results from \cite{dalili_dependence_2010} that will be useful.
Given a simplicial complex $\Gamma$ on the vertex set $X=\{x_{1},\ldots, x_{n}\}$ whose facets are denoted by
$F_1,\ldots,F_m$, consider $m$ new vertices,
$Y:=\{y_{1},\ldots, y_{m}\}$, and define a new simplicial complex, $\Delta(\Gamma)$,
on the vertex set $X\sqcup Y$ by
\begin{equation}\label{DK}
\Delta(\Gamma) :=\Delta'\cup \Delta_{X}\,,
\end{equation}
where $\Delta_X$ denotes the $(n-1)$-simplex on the vertex set $X$, and $\Delta'$ is the simplicial complex
given by
$
\Delta'=\{{\sigma \cup \tau : \sigma \in \Gamma, \tau\subset \{{y_{j}: \sigma \subset F_j}}\} \}
$.
Then, \cite[Theorem 4.7]{dalili_dependence_2010} states that
\begin{equation}\label{teoremaDK}
\tilde H_{i+1}(\Delta(\Gamma))\simeq\tilde H_{i}(\Gamma)\,,\ \forall\,i\geq 0\,.
\end{equation}

\medskip

Let $G$ be a connected bipartite graph on the vertex set {$V(G)=X\sqcup Y$} with
$X=\{x_1,\ldots,x_n\}$, $Y=\{y_1,\ldots,y_m\}$, set
$R:=\K[x_{1},\ldots, x_{n},y_{1},\ldots, y_{m}]$, and denote as before
$W_X:=W\cap X$, $W_Y:=W\cap Y$ for any subset $W$ of $V(G)$.
One has that the set
\begin{equation}\label{DKgamma}
\Gamma_G:=\{\sigma\subset N_{G^{bc}}(y) : y\in Y\}
\end{equation}
is a simplicial complex on $X\setminus \{x\in X\hbox{ that are isolated vertices of } G^{bc}\}$.

\begin{definition}\label{reducedconstruction}
We say that a subset $W\subset V(G)$ is {\it relevant} if
$|W|\geq 3$ and, for all $u,v\in W$, $N_{G[W]}(u)\not\subset N_{G[W]}(v)$.
\end{definition}

\begin{remark}\label{rmkRelevantSubsets}
\begin{itemize}
\item
If $W$ is not relevant, then there exist $u,v\in W$ such that $N_{G[W]}(u)\subset N_{G[W]}(v)$
and $H_i(\Delta(G[W]))\simeq H_i(\Delta(G[W\setminus\{v\}]))$ for all $i\geq 0$ by Lemma~\ref{lanota}.\ref{lacinco}.
\item
If $W$ is relevant, then $|W_X|>1$ and $|W_Y|>1$.
\end{itemize}
\end{remark}

\begin{lemma}\label{facetsDK}
If $W\subset V(G)$ is relevant, then
$\Delta(\Gamma_{G[W]})=\Delta(G)[W]$.
\end{lemma}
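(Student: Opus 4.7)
The plan is to unpack construction~(\ref{DK}) for $\Gamma=\Gamma_{G[W]}$ and check the equality face by face, after identifying the ``new'' vertices with $W_Y$. The heart of the argument is the following claim: when $W$ is relevant, the facets of $\Gamma_{G[W]}$ are exactly the distinct sets $F_y:=N_{G^{bc}[W]}(y)$ for $y\in W_Y$, and the vertex set of $\Gamma_{G[W]}$ is all of $W_X$. Under the resulting identification of the new vertices with $W_Y$, the vertex set of $\Delta(\Gamma_{G[W]})$ becomes $W_X\sqcup W_Y=W$, matching that of $\Delta(G)[W]$.

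To justify the claim, I would first use relevance to rule out isolated vertices in $G^{bc}[W]$: if $y\in W_Y$ were isolated in $G^{bc}[W]$, then $N_{G[W]}(y)=W_X$ would contain $N_{G[W]}(y')$ for every $y'\in W_Y\setminus\{y\}$ (which is nonempty since $|W_Y|>1$ by Remark~\ref{rmkRelevantSubsets}), contradicting relevance; the argument for $W_X$ is symmetric. Hence every $F_y$ is a nonempty face of $\Gamma_{G[W]}$ and the vertex set of $\Gamma_{G[W]}$ is $W_X$. For distinct $y,y'\in W_Y$, since $F_y=W_X\setminus N_{G[W]}(y)$, complementation in $W_X$ reverses inclusions, so the incomparability of $N_{G[W]}(y)$ and $N_{G[W]}(y')$ imposed by relevance translates into the incomparability of $F_y$ and $F_{y'}$. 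Thus the $F_y$ are pairwise incomparable and each is a facet of $\Gamma_{G[W]}$, yielding the desired bijection between $W_Y$ and the set of facets of $\Gamma_{G[W]}$.

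With this in place both inclusions follow directly from the definitions. For $\Delta(\Gamma_{G[W]})\subset\Delta(G)[W]$: the simplex $\Delta_{W_X}$ is contained in $\Delta(G)[W]$ because $W_X$ is independent in the bipartite graph $G[W]$; and for a face $\sigma\cup\tau\in\Delta'$ with $\sigma\in\Gamma_{G[W]}$ and $\sigma\subset F_y$ for every $y\in\tau$, each $x\in\sigma$ lies in $N_{G^{bc}[W]}(y)$ for every $y\in\tau$, so $\{x,y\}\notin E(G)$ and $\sigma\cup\tau$ is independent in $G[W]$. For the reverse inclusion, write $\sigma\in\Delta(G)[W]$ as $\sigma=\sigma_X\sqcup\sigma_Y$; if $\sigma_Y=\emptyset$ then $\sigma\in\Delta_{W_X}$, and otherwise independence of $\sigma$ is equivalent to $\sigma_X\subset F_y$ for every $y\in\sigma_Y$, so $\sigma_X$ is a face of $\Gamma_{G[W]}$ and $\sigma=\sigma_X\cup\sigma_Y\in\Delta'$. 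The only real obstacle is the bookkeeping for construction~(\ref{DK}) in the bipartite setting; the relevance condition is tailored precisely so that the facets of $\Gamma_{G[W]}$ are in natural bijection with $W_Y$, after which the rest is routine.
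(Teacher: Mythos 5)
Your proposal is correct and follows essentially the same route as the paper: establish via relevance that the facets of $\Gamma_{G[W]}$ are precisely the sets $N_{G^{bc}[W]}(y)$ for $y\in W_Y$ (so the new vertices of construction~(\ref{DK}) identify with $W_Y$), and then check the equality of faces directly from the definitions. You are in fact somewhat more explicit than the paper about why relevance forces the sets $F_y$ to be pairwise incomparable and the vertex set of $\Gamma_{G[W]}$ to be all of $W_X$, which the paper's proof asserts more briefly.
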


\begin{proof}
Denote by $\Gamma:=\Gamma_{G[W]}$ the simplicial complex associated to the graph $G[W]$ as in (\ref{DKgamma}), let
$\mathcal F(\Gamma)$ be its set of facets, and set $\Delta:=\Delta(\Gamma)$ as defined in (\ref{DK}).
Since $W$ is relevant, $G[W]$ has no isolated vertex and hence $W_X$ is the vertex set of $\Gamma$.
Moreover,
$\mathcal F(\Gamma)=\{N_{G^{bc}[W]}(y) : y\in W_Y\}$.
This implies that
$\Delta=\Delta'\cup\Delta_{W_X}$ where
$\Delta'=\{\sigma\cup \tau : \sigma\in \Gamma_{G[W]}, \tau\subset\{y\in W_Y : \sigma\subset N_{G^{bc}[W]}(y)\}\}$.
Consider $\sigma\subset W$.
If $\sigma\subset W_X$ then $\sigma \in \dd_{W_X}\subset\dd$ and also $\sigma \in \dd(G)[W]$. Otherwise, one has that
 \begin{eqnarray*}
        \sigma\in\Delta\setminus\dd_{W_X}  & \Leftrightarrow & \sigma_X \in \Gamma_{G[W]}, \sigma_Y\not=\emptyset  \hbox{ and }       	 \sigma_X\subset N_{G^{bc}[W]}(y)\, , \forall y\in\sigma_Y \\
                                   & \Leftrightarrow & \sigma_Y\not=\emptyset \hbox{ and } \sigma_X\subset N_{G^{bc}[W]}(y)\, , \forall y\in\sigma_Y \\
                                   & \Leftrightarrow & \sigma_Y\not=\emptyset \hbox{ and } \{x,y\}\not\in E(G[W])\, , \forall x\in\sigma_X\, ,\forall y\in\sigma_Y \\
                                   & \Leftrightarrow & \sigma\not\subset W_X \hbox{ and } \sigma\in\Delta(G)[W]\\
                                   & \Leftrightarrow & \sigma\in\Delta(G)[W]\setminus \dd_{W_X}.\\
 \end{eqnarray*}
Thus, $\sigma\in\Delta \Leftrightarrow  \sigma\in\Delta(G)[W]$.
\end{proof}

\begin{lemma}\label{gradogens}
Let $I:=I_{\Gamma_{G[W]}}$ be the Stanley-Reisner ideal associated to the simplicial complex $\Gamma_{G[W]}$ and let
$\{m_1,\ldots,m_s\}$ be its monomial minimal generating set. One has that:
\begin{itemize}
 \item if $W$ is relevant then $\deg(m_i)\geq 2, \forall i\in[s]$;
 \item $\max\{\deg(m_i) : i\in [s]\} \leq \mu(G[W])$.
\end{itemize}
\end{lemma}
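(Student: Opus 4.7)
The plan is to translate the two claims about generators of $I=I_{\Gamma_{G[W]}}$ into statements about minimal non-faces of $\Gamma_{G[W]}$. Recall that $\sigma\subset W_X$ is a face of $\Gamma_{G[W]}$ precisely when there exists $y\in W_Y$ with $\sigma\subset N_{G^{bc}[W]}(y)$, equivalently, with $\{x,y\}\notin E(G[W])$ for every $x\in\sigma$. Under this correspondence, a minimal generator $m$ of $I$ of degree $d$ is the squarefree product of the vertices in a minimal non-face $\sigma$ of $\Gamma_{G[W]}$ with $|\sigma|=d$, so the lemma reduces to two statements about $\sigma$.

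For the first item, I would show that under the relevance hypothesis no singleton $\{x\}$ with $x\in W_X$ is a non-face of $\Gamma_{G[W]}$, so that no minimal generator has degree~$1$. The singleton $\{x\}$ fails to be a face exactly when $N_{G[W]}(x)=W_Y$. If this happened, the inequality $|W_X|\geq 2$ (from Remark~\ref{rmkRelevantSubsets}) would provide some $x'\in W_X\setminus\{x\}$ with $N_{G[W]}(x')\subset W_Y=N_{G[W]}(x)$, contradicting the relevance of $W$.

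For the second item, I would take an arbitrary minimal non-face $\sigma$ of $\Gamma_{G[W]}$ and extract from it an induced matching of size $|\sigma|$ in $G[W]$. The construction proceeds vertex by vertex: for each $x\in\sigma$, minimality of $\sigma$ ensures that $\sigma\setminus\{x\}$ is a face, so I can pick $y_x\in W_Y$ with $\sigma\setminus\{x\}\subset N_{G^{bc}[W]}(y_x)$; since $\sigma$ itself is not a face, necessarily $x\notin N_{G^{bc}[W]}(y_x)$, i.e.\ $\{x,y_x\}\in E(G[W])$, while $\{x',y_x\}\notin E(G[W])$ for every $x'\in\sigma\setminus\{x\}$. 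Distinctness of the $y_x$ is then immediate (an identification $y_x=y_{x'}$ with $x\neq x'$ would put $x'$ both inside and outside $N_{G^{bc}[W]}(y_x)$), and the matching $\{\{x,y_x\}:x\in\sigma\}$ is induced in $G[W]$ because the pairs $\{x,x'\}$ and $\{y_x,y_{x'}\}$ are automatic non-edges by bipartiteness, while the pairs $\{x,y_{x'}\}$ are non-edges by the defining property of $y_{x'}$. This yields $|\sigma|\leq\mu(G[W])$, as required.

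The conceptual heart of the argument is the observation that the auxiliary vertices $y_x$, produced by minimality of $\sigma$, simultaneously witness the edge $\{x,y_x\}$ of the matching and its isolation from the remaining matching edges; everything else then follows immediately from the definition of $\Gamma_{G[W]}$. I therefore do not anticipate a substantial obstacle: the only subtle point is keeping track of which assertion requires relevance (only the first, to exclude singleton minimal non-faces) and which does not (the second, which works for arbitrary $W$ provided the generating set is non-empty).
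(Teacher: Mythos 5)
Your proposal is correct and follows essentially the same route as the paper's proof: the first item by showing relevance forces $N_{G[W]}(x)\subsetneq W_Y$ so every singleton is a face, and the second by using minimality of a non-face $\sigma$ to pick, for each $x\in\sigma$, a witness $y_x\in W_Y$ whose edge $\{x,y_x\}$ is isolated from the other chosen edges, yielding an induced matching of size $|\sigma|$. No substantive differences.
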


\begin{proof}
If $W$ is relevant and $x\in W$
then $N_{G[W]}(x)\subsetneq W_Y$. Thus, $x\in N_{G^{bc}[W]}(y)$ for some
$y\in W_Y$, and hence $\{x\}\in\Gamma_{G[W]}$. Therefore, non-faces must have dimension strictly greater than 1.
Since minimal generators of $I$ correspond to minimal non-faces of $\Gamma_{G[W]}$,
the first claim follows.

\smallskip
If $g=x_{i_1}\cdots x_{i_d}$ is a minimal generator of $I$, then $\{x_{i_1},\ldots,x_{i_d}\}\not\subset N_{G^{bc}[W]}(y)$ for all $y\in W_Y$ and $\frac{g}{x_{i_k}}\not\in I$ for all $k\in [d]$. Hence, for every $l\in [d]$, $F_l:=\{ x_{i_k} : k\not= l\}\subset N_{G^{bc}[W]}(y)$ for some element $y$ in $W_Y$ that we denote by
$y(l)$. Then, $x_{i_l}\not\in N_{G^{bc}[W]}(y(l)) $, or equivalently, $x_{i_l}\in N_{G[W]}(y(l)) $ and $x_{i_k}\not\in N_{G[W]}(y(l))$ if $k\not=l$. So $\{\{x_{i_l},y(l)\}:l\in [d]\}$ is a set consisting of $d$ disconnected edges of $G[W]$. This implies that $\mu(G[W])\geq d$.
\end{proof}

\begin{proof}[Proof of Theorems~\ref{characterization} and \ref{main1}]
We will first prove the equivalence in Theorem~\ref{characterization} and show that the extra information
contained in Theorem~\ref{main1} then follows quite easily.
First assume that $\reg(I(G))=3$. By Theorem~\ref{froberg},
$G^c$ contains an induced cycle of length $l\geq 4$.
Moreover, if there exists a subset $W$ of $V(G)$ such that $G^{bc}[W]\simeq C_{l}$ for some (even) $l\geq 6$ then,
since on one hand $\beta_{i,j}(I(G))\geq \beta_{i,j}(I(G[W]))$ for all $i,j$
by Hochster's Formula (\ref{hochster}), and on the other $\beta_{l-4,l}(I(C_{l}^{bc}))=1$ by
Theorem~\ref{bettibcciclo}, one gets that
$\reg(I(G))\geq 4$, a contradiction.

\smallskip
Conversely, assume that $\reg(I(G))\not=3$. If $\reg(I(G))=2$ then there is no induced cycle in $G^c$ by Theorem~\ref{froberg} and the result holds.
If $\reg(I(G))>3$ then, by Theorem~\ref{BettiDiagEdgeIdeal}, there exists $i$ such that $\beta_{i,i+4}(I)\not=0$.
Denote by $i_4$ the smallest integer with this property.
By \cite[Lemma 2.2]{katzman_characteristic-independence_2006}, $i_4\geq 2$ and if $i_4=2$, then
$\beta_{2,6}(I)\neq 0$ is the number of induced subgraphs of $G$ isomorphic to $3K_2$.
We only have to notice that $(3K_2)^{bc}\simeq C_6$ to obtain that if $i_4=2$,
$G^{bc}$ contains an induced cycle of length 6.
On the other hand, all the items in Theorem \ref{main1} follow in this case from Theorem~\ref{BettiDiagEdgeIdeal}
and \cite[Theorem~2.1]{gasharov_resolutions_2002} which states that,
for any monomial in $R$, $\xx{\mathbf m}$, if one collects at each step of the
minimal multigraded free resolution of $I(G)$, the minimal generators whose multidegree divides $\xx{\mathbf m}$,
one gets a minimal multigraded free resolution of $I(G)_{\mathbf m}$, the edge ideal whose minimal generators divide $\xx{\mathbf m}$.

\smallskip
If $i_4\geq3$, Hochster's Formula (\ref{hochsterMult}) tells us that
there exists $W\subset V(G)$ such that
\begin{equation}\label{propW}
 |W|=i_4+4 \hbox{ and } \dim_{\K}(\tilde H_{2}(\Delta(G)[W]))>0\, .
\end{equation}
As in the case $i_4=2$, we will be done using Theorem~\ref{BettiDiagEdgeIdeal} and \cite[Theorem~2.1]{gasharov_resolutions_2002} if we show that the subsets $W\subset V(G)$ satisfying (\ref{propW}) are the ones such that
\begin{equation}\label{propC}
 G[W]\simeq (C_{i_4+4})^{bc}\, .
\end{equation}
If $W$ satisfies (\ref{propC}), then it satisfies (\ref{propW}) by Proposition~\ref{homDelta}.
Now take $W$ satisfying (\ref{propW}) and consider the simplicial complex
$\Gamma:=\Gamma_{G[W]}$.
Note that, using Remark~\ref{rmkRelevantSubsets}, $W$ has to be a relevant subset of vertices by minimality of $i_4$. Applying
Lemma~\ref{facetsDK} and (\ref{teoremaDK}), one has that
$$\dim_{\K}(\tilde H_{1}(\Gamma))=\dim_{\K}(\tilde H_{2}(\Delta(G)[W]))>0\, .$$
Moreover, $\dim_{\K}(\tilde H_{1}(\Gamma[X']))=0$ for all $X'\subsetneq W_X$ since $\Delta(\Gamma_{X'})\simeq \Delta(G)[W']$ where $W'=X'\sqcup W_{Y}$ and if $\dim_{\K}(\tilde H_{1}(\Gamma[X']))=\dim_{\K}(\tilde H_{2}(\Delta(G)[W']))>0$, we will reach a contradiction with the minimality of the size of $W$.

\smallskip
As $i_4>2$, we have $\beta_{2,6}(I(G))=0$ and hence, by \cite[Lemma 2.2]{katzman_characteristic-independence_2006}, $\mu(G)=2$.
Thus, by Lemma \ref{gradogens}, $I_\Gamma$ is generated in degree 2, i.e., it is an edge ideal, and hence we can write $\Gamma=\Delta(G^{*})$ for some simple graph $G^{*}$ on the vertex set $W_X$. Thus, $\dim_{\K}(\tilde H_{1}(\Delta(G^*)))>0$ and $\dim_{\K}(\tilde H_{1}(\Delta(G^*)[X']))=0$ for all $X'\subsetneq W_X$. Applying Theorem~\ref{nuestro},
we have that $C_l<(G^*)^c$ for some $l\geq4$ but $C_l\not<(G^*)^c[X']$
for all $X'\subsetneq W_X$, so necessarily, $(G^*)^c=C_l$ and $l=|W_X|$. Therefore, $\Gamma=\dd(C_l^c)=C_l$
and we have $|N_{G^{bc}[W]}(y)|=2$, for all $y \in W_{Y}$.
Together with the fact that $N_{G^{bc}[W]}(u)\not\subset N_{G^{bc}[W]}(v)$ for all $u,v\in W$ such that $u\not= v$ (so $|N_{G^{bc}[W]}(u)|\not=1, u \in W_{X}$) and that $\sum_{u\in W_{X}}\deg_{G^{bc}[W]}(u)=\sum_{v\in W_{Y}}\deg_{G^{bc}[W]}(v)$ (so $N_{G^{bc}[W]}(u)|\leq 2, u \in W_{X})$, this implies that $|N_{G^{bc}[W]}(y)|=2$ for all $y \in W$. Moreover, $G^{bc}[W]$ is connected because $\Gamma$ is,
and hence $\bc G[W]\simeq C_{|W|}$.
\end{proof}

\begin{remark}
One can find in \cite{katzman_characteristic-independence_2006} several examples of edge ideals
whose regularity is 3 or 4 depending on the characteristic of the field $\K$. This
shows that in Theorem~\ref{characterization} the bipartite hypothesis can not be removed
since the information provided there only depends
on the combinatorics of the graph $G$. That is why we restricted ourselves to bipartite edge ideals in this work.
Now observe that even for bipartite edge ideals, it is hopeless to try an
extrapolation of our results for higher values of the regularity as an example
in \cite{dalili_dependence_2010} shows.
\end{remark}

 \section{The non-squarefree case}\label{nonsqf}

Let $I$ be an ideal in $R:=\K[x_1,\ldots,x_n]$ generated by monomials of degree two which is not squarefree.
Assume, without loss of generality, that
$I$ is minimally generated by $\{m_{1},\ldots,m_{s}\}$ where
$m_{1}=x_{1}^{2}$, \ldots, $m_{l}=x_{l}^{2}$ and $m_{l+1},\ldots,m_{s}$ are squarefree
for some $l \in [s]$.
We define
 \begin{itemize}
\item $I_{sqf}:=(m_{l+1},\ldots,m_{s})\subset R,$ and
\item $I_{pol}:=(x_{1}y_{1},\ldots,x_{l}y_{l},m_{l+1},\ldots,m_{s})\subset R^{*}:=\K[x_{1},\ldots,x_{n},y_{1},\ldots,y_{l}]$.
\end{itemize}
The ideal $I_{pol}$, called the {\it polarization} of $I$, has the following useful property: if we provide $R$ and $R^{*}$ with a $\mathbb N^{n}$-multigrading such that $\deg(x_{i})={\mathbf e_{i}}$ for all $i\in [n]$ and $\deg(y_{j})={\mathbf e_{j}}$
for all $j\in [l]$ then, by \cite[Corollary~1.6.3]{herzoghibi},
\begin{equation}\label{pol}
\beta_{i,\mathbf m}(I)=\beta_{i,\mathbf m}(I_{pol}),\ \forall i\geq 0,\ \forall \mathbf m\in \mathbb N^{n}.
\end{equation}

Both ideals $I_{sqf}$ and $I_{pol}$ are edge ideals, the first one on the vertex set $\{x_{1},\ldots,x_{n}\}$ and the second on $\{x_{1},\ldots,x_{n},y_{1},\ldots,y_{l}\}$.
We will call $G$ the non simple graph associated to $I$ and denote, as in the squarefree case, $I=I(G)$.
Denote by $G_{sqf}$ and $G_{pol}$ the simple graphs associated to $I_{sqf}$ and $I_{pol}$, respectively. Observe that $G_{sqf}$ and $G_{pol}$ are obtained by removing loops in $G$ and substituting whiskers for loops in $G$, respectively.

\begin{definition}
We say that two edges $e_{1},e_{2}\in E(G)$ are \textit{totally disjoint} provided $\{u,v\}\not\in E(G)$ if $u\in e_1$ and $v\in e_2$.
\end{definition}

\smallskip
Assume that the simple graph $G_{sqf}$ is connected and bipartite. In this case, we say that the non simple graph $G$ is {\it bipartite} and define the {\it bipartite complement} of $G$ as the bipartite complement of the simple graph $G_{sqf}$, i.e., $G^{bc}:=(G_{sqf})^{bc}$.
We also define the {\it complement} of $G$ as the complement of the simple graph $G_{sqf}$, i.e., $G^{c}:=(G_{sqf})^{c}$.

\smallskip

We can complete the characterization of ideals associated to bipartite graphs having regularity 3 with the non-squarefree case as follows:
\begin{proposition}
Let $I\subset R$ be a non-squarefree monomial ideal generated in degree two and assume that the non simple graph $G$ associated to $I$ is bipartite. Then, $I$ has regularity 3 if and only if
\begin{itemize}
 \item $G$ either has two totally disjoint edges or $C_l<G^{c}$ for some $l\geq 5$,
 \item $G$ does not have three edges that are pairwise totally disjoint, and
 \item $G^{bc}$ has no induced cycle of length $\geq 8$.
\end{itemize}

\end{proposition}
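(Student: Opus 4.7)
The plan is to reduce the statement to Theorem~\ref{characterization} via polarization. By \eqref{pol} one has $\reg(I)=\reg(I_{pol})$, and the simple graph $G_{pol}$ is bipartite and connected because attaching whiskers to $G_{sqf}$ preserves both properties. Hence Theorem~\ref{characterization} applies to $I_{pol}$, and it remains to translate the two conditions ``$(G_{pol})^{c}$ has an induced cycle (of length $\geq 4$)'' and ``$(G_{pol})^{bc}$ has no induced cycle of length $\geq 6$'' into the three conditions on $G$ appearing in the statement.

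The translation rests on two observations. First, because the new vertices $y_i$ of $G_{pol}$ have $x_i$ as their only neighbor, one checks directly from the definitions that $k$ edges of $G$ are pairwise totally disjoint if and only if the corresponding $k$ edges of $G_{pol}$ (where each loop $x_i^2$ is replaced by the whisker $\{x_i,y_i\}$) are. Combined with the isomorphisms $C_4\simeq(2K_2)^{c}$ and $C_6\simeq(3K_2)^{bc}$, this yields that $C_4<(G_{pol})^{c}$ if and only if $G$ has two totally disjoint edges, and $C_6<(G_{pol})^{bc}$ if and only if $G$ has three pairwise totally disjoint edges. Second, I claim that for $l\geq 5$, $C_l<(G_{pol})^{c}$ iff $C_l<G^{c}$, and for $l\geq 8$ even, $C_l<(G_{pol})^{bc}$ iff $C_l<G^{bc}$. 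The ``if'' direction follows from the identifications $G^{c}=(G_{pol})^{c}[\{x_1,\ldots,x_n\}]$ and $G^{bc}=(G_{pol})^{bc}[\{x_1,\ldots,x_n\}]$; for ``only if'' I show that no $y_i$ can appear in such a long induced cycle. In $(G_{pol})^{c}$ the vertex $y_i$ has $x_i$ as its only non-neighbor, so an induced $C_l$ through $y_i$ with $l\geq 5$ would force $l-3\geq 2$ non-neighbors of $y_i$ inside the cycle, which is impossible. Similarly, in $(G_{pol})^{bc}$ the vertex $y_i$ has $x_i$ as its only non-neighbor on the opposite side of the bipartition, whereas an induced $C_{2s}$ through $y_i$ with $2s\geq 8$ would require $s-2\geq 2$ such non-neighbors.

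Combining the two observations, ``$(G_{pol})^{c}$ has an induced cycle of length $\geq 4$'' is equivalent to ``$G$ has two totally disjoint edges or $C_l<G^{c}$ for some $l\geq 5$'', which is the first condition of the proposition; and ``$(G_{pol})^{bc}$ has no induced cycle of length $\geq 6$'' splits according to length into ``$G$ has no three pairwise totally disjoint edges'' (the $C_6$ part) and ``$G^{bc}$ has no induced cycle of length $\geq 8$'' (the $C_l$ part for $l\geq 8$), which are the second and third conditions. Invoking Theorem~\ref{characterization} for $G_{pol}$ then produces the claimed equivalence. The main obstacle is the case analysis showing that the $y$-vertices cannot appear in induced cycles of length $\geq 5$ of $(G_{pol})^{c}$ nor of length $\geq 8$ of $(G_{pol})^{bc}$; once this is settled, the rest is bookkeeping on totally disjoint edges and short cycles.
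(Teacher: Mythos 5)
Your proposal is correct and follows the paper's proof exactly: polarize, apply Theorem~\ref{characterization} to $G_{pol}$, and translate the cycle conditions on $(G_{pol})^{c}$ and $(G_{pol})^{bc}$ back to $G$. The paper leaves the translation step as ``rewriting these properties \ldots the result follows,'' whereas you supply the (correct) details, in particular the degree-count argument showing the whisker vertices $y_i$ cannot lie on induced cycles of length $\geq 5$ in $(G_{pol})^{c}$ or of length $\geq 8$ in $(G_{pol})^{bc}$.
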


\begin{proof}
By (\ref{pol}), $\reg(I)=3$ if and only if $\reg(I_{pol})=3$ and, using Theorem~\ref{characterization}, this occurs if and only if
$(G_{pol})^{c}$ has an induced cycle of length 4 and $(G_{pol})^{bc}$ has no induced cycle of length $\geq 6$.
Rewriting these properties of the graph $G_{pol}$ in terms of the graph $G$, the result follows.
\end{proof}

When $\reg(I)>3$, the claims in Theorem~\ref{main1} remain valid if $G$ does not contain three edges that are pairwise totally disjoint since $l$-cycles in $(G_{sqf})^c$ and in $(G_{sqf})^{bc}$ coincide with the $l$-cycles in $(G_{pol})^c$ and $(G_{pol})^{bc}$ respectively, provided $l>6$. However, if $G$ has three edges that are pairwise totally disjoint, then:
 \begin{enumerate}
\item[$\bullet$] $\beta_{i,j}=0$ if $i\leq1$ and $j>i+3$;
\item[$\bullet$] $\beta_{2,6}$ is the number of induced subgraphs of $G$ isomorphic to three pairwise totally disjoint edges;
\item[$\bullet$] $\beta_{2,j}=0$ for all $j>6$;
\item[$\bullet$] considering the $\mathbb N^{n}$-multigrading on $R$, for all ${\mathbf m}\in \mathbb N^{n}$ such that $|{\mathbf m}|=6$, one has that
$\beta_{2,{\mathbf m}}=1$ if $G[\{x_{i}: m_{i}=1\}]$ consists of three totally disjoint edges. Otherwise,
$\beta_{2,{\mathbf m}}=0$.
\end{enumerate}

 \begin{example}
The ideal $I=(x_{1}^{2},x_{1}x_{5},x_{2}x_{5},x_{2}x_{7},x_{3}x_{5},x_{3}x_{6},x_{3}x_{7},x_{4}x_{6})$
satisfies that $\beta_{2,6}=1$. The bipartite graph $G^{bc}$ does not have any
induced $6$-cycle but there are three pairwise disjoint edges in $G$.
\end{example}

{\small

}

\end{document}